
\documentclass[12pt, a4paper]{amsart}

\usepackage{amsmath,amssymb,amsfonts}
\usepackage{bbold}
\usepackage{epic,eepic}
\usepackage{enumerate}

\usepackage{amsthm}
\usepackage[all]{xy}
\usepackage{caption}
\usepackage{stmaryrd} 
\usepackage[dvipdfmx]{graphicx,color} 
\usepackage{tikz}
\usetikzlibrary{trees}

\theoremstyle{plain}
\newtheorem{thm}{Theorem}[section]
\newtheorem{prop}[thm]{Proposition}
\newtheorem{lem}[thm]{Lemma}

\theoremstyle{definition}
\newtheorem{defn}[thm]{Definition}
\newtheorem{exmp}[thm]{Example}

\numberwithin{figure}{section}

\numberwithin{table}{section}

\DeclareMathOperator*{\esssup}{ess\,sup}

\newcommand{\lspace} {
  \vspace{0.8\baselineskip}
}

\newcommand{\pair}[2]{
  \langle  #1, \, #2 \rangle 
}

\newcommand{\abs}[1]{
  \lvert  #1 \rvert
}


\newdir{ >}{{}*!/-5pt/@{>}}

\definecolor{arrowred}{rgb}{0,0,0} 
\newcommand{\newword}[1]{\textbf{\textit{#1}}}

\numberwithin{equation}{section}

\newcommand{\ProbX}{\bar{X}}
\newcommand{\ProbY}{\bar{Y}}
\newcommand{\ProbZ}{\bar{Z}}
\newcommand{\ProbNX}{\bar{X}}
\newcommand{\ProbNY}{\bar{Y}}
\newcommand{\ProbNZ}{\bar{Z}}
\newcommand{\Prob}{\mathbf{Prob}}
\newcommand{\mpProb}{\mathbf{mpProb}}
\newcommand{\CPS}{\mathbf{CPS}}

\def \obsolete {0}

\newcommand{\Set}{\mathbf{Set}}

\mathchardef\mhyphen="2D  

\title {A Category of Probability Spaces}

\thanks{This work was supported by JSPS KAKENHI Grant Number 18K01551.}

\author[T. Adachi and Y. Ryu]{Takanori Adachi and Yoshihiro Ryu}
\address{Graduate School of Management,
         Tokyo Metropolitan University,
         1-4-1 Marunouchi, Chiyoda-ku, Tokyo 100-0005, Japan}
\email{Takanori Adachi <tadachi@tmu.ac.jp>}

\address{Department of Mathematical Sciences,
         Ritsumeikan University,
         1-1-1 Nojihigashi, Kusatsu, Shiga, 525-8577 Japan}
\email{Yoshihiro Ryu <iti2san@gmail.com>}

\date{\today}

\keywords{
  conditional expectation,
  Radon-Nikodym derivative,
  category theory
}

\subjclass[2000]{
  Primary 
    60A99,   
    16B50;   
  secondary
    60G20,   
    18B99   
}

\begin{document}

\maketitle

\begin{abstract}
We introduce a category $\Prob$ of probability spaces whose objects are 
all probability spaces
and whose arrows correspond to measurable functions
 satisfying an absolutely continuous requirement.
We can consider 
a $\Prob$-arrow
as
an evolving direction of information.
%
We introduce a contravariant functor
$\mathcal{E}$
from 
$\Prob$
to
$\Set$,
the category of sets.
The functor
$\mathcal{E}$
provides conditional expectations along arrows in 
$\Prob$,
which are generalizations of the classical conditional expectations.
For a 
$\Prob$-arrow
$f^-$,
we introduce two concepts
$f^-$-measurability
and
$f^-$-independence
and investigate their interaction with 
conditional expectations along
$f^-$.
We also show that
the completion of probability spaces
is naturally formulated as an endofunctor of
$\Prob$.

\end{abstract}



\section{Introduction}
\label{sec:intro}

One of the most prominent examples
of applying category theory to probability theory
 is 
Lawvere and
Giry's approach
of formulating transition probabilities in a monadic example
(
\cite{lawvere_1962},
\cite{giry_1982}
).
However,
there are few
of making categories
consisting of all
probability spaces
due to a difficulty of
finding an appropriate condition of their arrows.
One of the 
trials
is a way to adopt measure-preserving functions as arrows.
With this setting, for example, 
Franz develops a stochastic independence theory in
\cite{franz_2003}.
Our approach
is one of this simple-minded trials.
%
Another recent trial of generalizing arrows is made
by
Motoyama and Tanaka
\cite{MT_2016}.
They
introduce
a notion of 
bounded arrows between probability spaces
and define the category
of all probability spaces and all bounded arrows between them,
called $\CPS$.

%
We have two main results in this paper.
One is an introduction of
the category
$\Prob$
of all probability spaces and
null-presearving maps between them.
The other one is that we show the existence  of
the conditional expectation functor 
from 
$\Prob$
to
$\Set$,
which is a natural generalization of
the classical notion of conditional expectation.

We introduce a
category
$\Prob$
of all probability spaces
in order to see
a possible generalization of
some classical tools in probability theory
including conditional expectations.
Actually, 
\cite{adachi_2014crm}
provides a simple category
for formulating conditional expectations,
but its objects and arrows are so limited 
that we cannot use it as a foundation of categorical probability theory.
%
We will also see 
that 
all arrows in the category
$\CPS$
defined in
\cite{MT_2016}
are arrows in
$\Prob$
as well if ignoring they have opposite directions.
Therefore,
$\CPS^{op}$
is a subcategory of
$\Prob$.


The original idea of the category 
$\Prob$
comes when
we sought a generalization of the notion of financial risk measures
that is one of the crucial tools for managing risks in the financial industry.
The risk measure is a means of evaluating a future risk 
that is represented as a random variable,
with current information by calculating its conditional expectation given the information.
The reason of using the conditional expectation is 
that we have less information than we will have in future.
The conditional expectation is a perfect tool
as long as the only difference between today and future is the information we can access,
that is,
the changing part of a probability space
$(\Omega, \mathcal{F}, \mathbb{P})$
from now to future
is just the 
$\sigma$-field
$\mathcal{F}$.
However, after experiencing recent financial crises,  
we are suspecting that 
the probability measure
$\mathbb{P}$
also varies through time, 
which created the disasters since
we treated it as invariant when we calculated the risk.
A trial of making the probability measure vary was 
the motivation of 
\cite{adachi_2014crm}.

In this paper,
beyond that,
we treat the situation when the underlying set
$\Omega$
of elementary events
also
varies,
that is, all the three components of the probability space are changing.
We represent this change of entire probability spaces by an arrow between them,
thinking within a category of probability spaces.
So a natural requirement for the arrow is that
we can extend the classical conditional expectation given the (current) 
$\sigma$-field
to a sort-of conditional expectation along the arrow.
The category
$\Prob$
was developed so that this requirement is satisfied.


The arrows of
$\Prob$
are
maps
corresponding to
measurable functions
satisfying an absolutely continuous requirement
that is weaker than the measure-preserving requirement
(Section \ref{sec:cat_prob}).
The requirement can be restated the inverse of the arrow preserves null sets.
The resulting 
$\Prob$-arrow
can be seen as
an evolving direction of information
together with its interpretation.
We will see that
the requirement allows us to
extend some important notions
relativized by a $\sigma$-algebra
in classical probability theory
to notions relativized by
a $\Prob$-arrow
$f^-$.
For example,
we introduce notions of
a conditional expectation along 
$f^-$
(Section \ref{sec:CE_fun}),
a
$f^-$-measurable function
(Section \ref{sec:meas})
and
a random variable independent of
$f^-$
(Section \ref{sec:indep}).
These are considered 
as generalizations of 
the classical counterparts.
The existence of those natural generalizations may
support a claim saying that
the requirement for 
$\Prob$-arrows
is a natural one.
We also see that
the completion procedure of probability spaces
becomes an endofunctor
of
$\Prob$
(Section \ref{sec:complF}).

The category
$\Prob$
and functors developed in this paper
convey more natural and richer structures
than those introduced in
\cite{adachi_2014crm}.

\section{Category of Probability Spaces}
\label{sec:cat_prob}

In this paper,
$
\bar{X} =
(X, \Sigma_X, \mathbb{P}_X)
$,
$
\bar{Y} =
(Y, \Sigma_Y, \mathbb{P}_Y)
$
and
$
\bar{Z} =
(Z, \Sigma_Z, \mathbb{P}_Z)
$
are
probability spaces.

\begin{defn}{[Null-preserving functions]}
A measurable function
$
f
  :
\ProbY
  \to
\ProbX
$
is called
\newword{null-preserving}
if
$
f^{-1}(A)
\in \mathcal{N}_Y
$
for every
$
A \in \mathcal{N}_X
$,
where
$
\mathcal{N}_X
  :=
\mathbb{P}_X^{-1}(0)
  \subset
\Sigma_X
$
and
$
\mathcal{N}_Y
  :=
\mathbb{P}_Y^{-1}(0)
  \subset
\Sigma_Y
$.
\end{defn}

The following characterization is straightforward.

\begin{prop}
\label{prop:null_pres}
Let
$
f
  :
\ProbY
  \to
\ProbX
$
be a measurable function.
Then,
$f$
is null-preserving iff
$
\mathbb{P}_Y \circ f^{-1} \ll \mathbb{P}_X
$,
where
$
\mu
  \ll
\nu
$
means that
$\mu$
is 
\newword{absolutely continuous}
 with respect to
$\nu$,
that is,
$
\mu(A) = 0
$
whenever
$
\nu(A) = 0
$.
\end{prop}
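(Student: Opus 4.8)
The plan is to observe that this is purely a matter of unfolding the two definitions: once each side of the claimed equivalence is written out as a universally quantified implication, the two read as the same sentence.

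First I would record that the object appearing in the absolute-continuity statement is well defined. Because $f \colon \ProbY \to \ProbX$ is measurable, $f^{-1}(A) \in \Sigma_Y$ for every $A \in \Sigma_X$, so the set function $\mu := \mathbb{P}_Y \circ f^{-1}$ is defined on all of $\Sigma_X$; a routine check (using $f^{-1}\bigl(\bigsqcup_n A_n\bigr) = \bigsqcup_n f^{-1}(A_n)$ together with countable additivity of $\mathbb{P}_Y$) shows that $\mu$ is a probability measure on $(X, \Sigma_X)$. This is exactly what makes the assertion $\mu \ll \mathbb{P}_X$ meaningful, and it is the sole place where measurability of $f$ is genuinely used.

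Next I would translate each side into elementary terms. By the definition of $\mathcal{N}_X$, the condition $A \in \mathcal{N}_X$ is the same as $\mathbb{P}_X(A) = 0$, and the condition $f^{-1}(A) \in \mathcal{N}_Y$ is the same as $\mathbb{P}_Y(f^{-1}(A)) = 0$, i.e. $\mu(A) = 0$. Hence ``$f$ is null-preserving'' reads: for every $A \in \Sigma_X$, $\mathbb{P}_X(A) = 0$ implies $\mu(A) = 0$. This is verbatim the condition $\mu \ll \mathbb{P}_X$ as spelled out in the statement (with $\nu = \mathbb{P}_X$), so both directions of the equivalence follow at once with no further argument.

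There is no real obstacle here: all the content lives in the definitions, and the only point deserving explicit mention is the well-definedness of the pushforward $\mu$ as a measure. I therefore expect the author's proof to be a one-line remark to this effect, consistent with the preceding sentence describing the characterization as \emph{straightforward}.
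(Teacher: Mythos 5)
Your proposal is correct and matches the paper's treatment: the paper omits the proof entirely, merely remarking that the characterization is straightforward, and your unfolding of the two definitions (plus the routine check that the pushforward $\mathbb{P}_Y \circ f^{-1}$ is a measure) is exactly the intended one-line argument.
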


The following diagram 
(that does not commute in general)
might be helpful to see the situation we consider.
\begin{equation*}
\xymatrix@C=20 pt@R=10 pt{
  Y
     \ar @{->}_{f} [dd]
&
  \mathcal{N}_Y
     \ar @{}^-{\subset} @<-6pt> [r]
&
  \Sigma_Y
     \ar @{->}^{\mathbb{P}_Y} [dr]
\\
&&&
  [0, 1]
\\
  X
&
  \mathcal{N}_X
     \ar @{->}^{f^{-1}} [uu]
     \ar @{}^-{\subset} @<-6pt> [r]
&
  \Sigma_X
     \ar @{->}^{f^{-1}} [uu]
     \ar @{->}_{\mathbb{P}_X} [ur]
}
\end{equation*}

\begin{defn}{[Bounded functions (Motoyama-Tanaka \cite{MT_2016})]}
A measurable function
$
f
  :
\ProbY
  \to
\ProbX
$
is called
\newword{bounded}
if
there exists a positive number
$
M > 0
$
such that 
$
\mathbb{P}_Y(f^{-1}(A))
	\le
M 
\mathbb{P}_X(A)
$
for every
$
A
	\in
\Sigma_X
$.
\end{defn}

By Proposition \ref{prop:null_pres},
the following proposition is obvious.

\begin{prop}
\label{prop:bnded_fct}
Every bounded function
$
f
  :
\ProbY
  \to
\ProbX
$
is null-preserving.
\end{prop}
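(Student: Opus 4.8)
The plan is to reduce everything to the absolute-continuity characterization already established in Proposition \ref{prop:null_pres}, rather than to argue directly with null sets. That proposition tells us that $f$ is null-preserving if and only if $\mathbb{P}_Y \circ f^{-1} \ll \mathbb{P}_X$, so it suffices to verify this single absolute-continuity condition from the bounded inequality.

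First I would unpack the hypothesis: boundedness supplies a constant $M > 0$ with $\mathbb{P}_Y(f^{-1}(A)) \le M\,\mathbb{P}_X(A)$ for every $A \in \Sigma_X$. To check $\mathbb{P}_Y \circ f^{-1} \ll \mathbb{P}_X$, I would fix an arbitrary $A \in \Sigma_X$ with $\mathbb{P}_X(A) = 0$ and substitute into this inequality, obtaining $\mathbb{P}_Y(f^{-1}(A)) \le M \cdot 0 = 0$. Since $\mathbb{P}_Y$ is a (nonnegative) measure, this forces $\mathbb{P}_Y(f^{-1}(A)) = 0$, which is exactly the defining property of $(\mathbb{P}_Y \circ f^{-1})(A) = 0$ whenever $\mathbb{P}_X(A) = 0$. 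Invoking Proposition \ref{prop:null_pres} then yields that $f$ is null-preserving, completing the argument.

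There is essentially no obstacle here, which is why the statement is flagged as obvious once Proposition \ref{prop:null_pres} is in hand. The only point worth a moment's care is that the bounded condition asserts a single constant $M$ valid for all $A$; but because we invoke the inequality only at sets of $\mathbb{P}_X$-measure zero, the precise value of $M$ is immaterial, as any finite multiplier collapses the right-hand side to $0$. Thus the heart of the proof is simply noting that absolute continuity is the $M$-independent shadow of the quantitative bounded estimate.
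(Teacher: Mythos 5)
Your argument is correct and is precisely the one the paper intends: the paper states the proposition is obvious from Proposition \ref{prop:null_pres}, and you have simply spelled out that obvious step (setting $\mathbb{P}_X(A)=0$ in the bounded inequality to get $\mathbb{P}_Y(f^{-1}(A))\le M\cdot 0=0$). Nothing further is needed.
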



\begin{prop}
\label{prop:comp_prob_arrows}
Let
$f$
and
$g$
be two null-preserving functions as follows:
\begin{equation*}
\xymatrix{
  \ProbZ
       \ar @{->}^{g} [r]
 &
  \ProbY
       \ar @{->}^{f} [r]
 &
  \ProbX
}
\end{equation*}
Then,
$
f \circ g
$
is also null-preserving.
\end{prop}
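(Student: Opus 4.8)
The plan is to reduce the claim to the elementary set-theoretic identity $(f \circ g)^{-1}(A) = g^{-1}(f^{-1}(A))$ and then apply the null-preserving hypotheses of $f$ and $g$ in succession. First I would note that the composite $f \circ g : \ProbZ \to \ProbX$ is genuinely measurable, since the composition of two measurable functions is measurable; this is all that is needed before the null-preserving condition for $f \circ g$ even makes sense.

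Next I would fix an arbitrary $A \in \mathcal{N}_X$, that is, a set with $\mathbb{P}_X(A) = 0$, and trace it backwards through the two maps. Because $f$ is null-preserving, $f^{-1}(A) \in \mathcal{N}_Y$; because $g$ is null-preserving, applying $g^{-1}$ to this element of $\mathcal{N}_Y$ yields an element of $\mathcal{N}_Z$. Combining this with the preimage identity above gives $(f \circ g)^{-1}(A) = g^{-1}(f^{-1}(A)) \in \mathcal{N}_Z$, which is exactly the null-preserving condition for $f \circ g$. Since $A \in \mathcal{N}_X$ was arbitrary, this finishes the argument.

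I do not anticipate any real obstacle: the whole content is the functoriality of taking preimages together with two applications of the hypothesis, so the proof amounts to a short diagram chase. The only point worth a moment of care is confirming measurability of the composite before invoking the definition. As an alternative one could work instead with the equivalent absolute-continuity formulation of Proposition \ref{prop:null_pres}, pushing the relation $\mathbb{P}_Z \circ g^{-1} \ll \mathbb{P}_Y$ forward along $f$ and chaining it with $\mathbb{P}_Y \circ f^{-1} \ll \mathbb{P}_X$; but the direct preimage chase on null sets is cleaner and avoids re-deriving how $\ll$ behaves under pushforward.
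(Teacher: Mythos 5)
Your proof is correct and is exactly the argument the paper has in mind: the paper's own proof is just the word ``Immediate,'' and your preimage chase $(f\circ g)^{-1}(A)=g^{-1}(f^{-1}(A))$ with two applications of the hypothesis is the standard way to spell that out. No gaps.
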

\begin{proof}
Immediate.
\end{proof}

Proposition \ref{prop:comp_prob_arrows}
makes the following definition
well-defined.

\begin{defn}{[Category $\Prob$]}
\label{defn:cat_prob}
A category
$
\Prob
$
is the category whose objects are
all probability spaces and 
the set of arrows between them are defined by
\begin{align*}
\Prob(
  \ProbX,
&
  \ProbY
)
  :=
\{
f^-
  \mid
f :
  \ProbY
\to
  \ProbX
\textrm{ is a null-preserving function.}
\},
\end{align*}
where
$f^-$
is a symbol
corresponding uniquely to
a function $f$.

We write 
$Id_X$
for an identity \textit{measurable} function 
from
$\ProbX$
to
$\ProbX$,
while writing
$id_X$
for an identity function
from
$X$
to
$X$.
Therefore,
the identity arrow of
a
$\Prob$-object
$\ProbX$
is 
$Id_X^-$.

\end{defn}

Motoyama and Tanaka \cite{MT_2016}
introduce the category
consisting of
all probability spaces and
all bounded arrows between them,
and call it
$\CPS$.

By 
Proposition \ref{prop:bnded_fct},
$\CPS^{op}$
is a subcategory
of
$\Prob$.
So the category
$\CPS$
is,
in a sense,
not large enough 
for
developing 
the theory of financial risk measures
as
we mentioned in 
Section \ref{sec:intro}.

\lspace

An arrow 
$f^-$
in
$\Prob$
can be considered to represent
an evolving direction of information
with a way of its interpretation.
The information is evolving along 
$f^{-1}$
but with a restriction
to its accompanying probability measure.

In order to see it more concretely,
let us consider the case where 
$f$ is an identity function on $X$, 
that is,
consider a 
$\Prob$-arrow
$
id_{X}^-
  :
(X, \Sigma_1, \mathbb{P}_1)
  \to
(X, \Sigma_2, \mathbb{P}_2)
$.
Then, 
we have
$
\Sigma_1
  \subset
\Sigma_2
$
and
$
\mathbb{P}_2
  =
\mathbb{P}_2
  \circ
id_X^{-1}
  \ll
\mathbb{P}_1
$.
This means that
the information is growing
while
the support of the probability measure is decreasing.
The latter makes sense
if we think of the following situation:
someone believed that some event
among many other events 
 may happen,
but now she has changed her mind 
to believe that the event will never occur,
and so she can concentrate on other possible events.

Actually,
\cite{adachi_2014crm}
treats this special situation.
It
introduces,
for a given measurable space
$
(\Omega, \mathcal{G})
$,
a category
$
\chi(\Omega, \mathcal{G})
$
whose objects
are all the pairs of the form
$
(\mathcal{F}_{\mathcal{U}},
\mathbb{P}_{\mathcal{U}})
$
where
$\mathcal{F}_{\mathcal{U}}$
 is 
a sub-$\sigma$-field
of
$\mathcal{G}$ and
$\mathbb{P}_{\mathcal{U}}$
is a probability measure on 
$\mathcal{G}$.
And it has
 a unique arrow
from 
$
(\mathcal{F}_{\mathcal{V}},
\mathbb{P}_{\mathcal{V}})
$
to
$
(\mathcal{F}_{\mathcal{U}},
\mathbb{P}_{\mathcal{U}})
$
only when
$
\mathcal{F}_{\mathcal{V}}
  \subset
\mathcal{F}_{\mathcal{U}}
$
and
$
\mathbb{P}_{\mathcal{U}}
  \ll
\mathbb{P}_{\mathcal{V}}
$.
Note that
there is a natural embedding
$\iota$
 of the category 
$
\chi(\Omega, \mathcal{G})
$
into
$\Prob$.
\begin{equation*}
\xymatrix@C=40 pt@R=10 pt{
  \chi(\Omega, \mathcal{G})
     \ar @{->}^{\iota} [r]
&
  \Prob
\\
  (\mathcal{F}_{\mathcal{V}}, \mathbb{P}_{\mathcal{V}})
     \ar @{->}^{\iota} [r]
     \ar @{->}_{*} [dd]
&
  (\Omega, \mathcal{F}_{\mathcal{V}}, \mathbb{P}_{\mathcal{V}}
         \mid
    \mathcal{F}_{\mathcal{V}}
  )
     \ar @{->}^{
       \iota(*) := id_{\Omega}^-
     } [dd]
\\
\\
  (\mathcal{F}_{\mathcal{U}}, \mathbb{P}_{\mathcal{U}})
     \ar @{->}^{\iota} [r]
&
  (\Omega, \mathcal{F}_{\mathcal{U}}, \mathbb{P}_{\mathcal{U}}
         \mid
    \mathcal{F}_{\mathcal{U}}
  )
}
\end{equation*}

\begin{prop}
\label{prop:init_obj}
A probability space
$
\mathbb{0}
  :=
(
  \{*\},
  \{ \{*\}, \emptyset \},
  \mathbb{P}_{\mathbb{0}}
)
$,
where
$
  \mathbb{P}_{\mathbb{0}}(\{*\}) := 1
$
and
$
  \mathbb{P}_{\mathbb{0}}(\emptyset) := 0
$,
is an initial object of the category
$\Prob$.
Actually, for a probability space
$
\ProbX
$,
$
!_{X}^-
  :
\mathbb{0}
  \to
\ProbX
$
is a unique arrow in
$\Prob$,
where
$
!_X
  :
X
  \to
\{ * \}
$
is a function such as
$
!_X(x) = *
$
for all 
$
x \in X
$.
\end{prop}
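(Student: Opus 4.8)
The plan is to exploit the fact that the one-point set $\{*\}$ is a terminal object in $\Set$, and then to verify that the unique set-function into it automatically satisfies the two requirements — measurability and null-preservation — needed to constitute a $\Prob$-arrow. Recall that, because of the contravariant indexing in Definition \ref{defn:cat_prob}, a $\Prob$-arrow from $\mathbb{0}$ to $\ProbX$ corresponds to a null-preserving function in the opposite direction, namely $!_X : X \to \{*\}$; keeping track of this reversal is the only point that requires any care.

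First I would observe that $!_X$ is the unique function from $X$ to $\{*\}$ in $\Set$, since any map into a singleton must send every element to that single point. I would then check that it is a $\Prob$-arrow. For measurability, with $\Sigma_{\mathbb{0}} = \{\{*\}, \emptyset\}$ we have $!_X^{-1}(\{*\}) = X \in \Sigma_X$ and $!_X^{-1}(\emptyset) = \emptyset \in \Sigma_X$, so $!_X$ is measurable regardless of the structure of $\ProbX$. For the null-preserving condition, note that since $\mathbb{P}_{\mathbb{0}}(\{*\}) = 1$ and $\mathbb{P}_{\mathbb{0}}(\emptyset) = 0$, the null ideal of $\mathbb{0}$ is $\mathcal{N}_{\mathbb{0}} = \{\emptyset\}$; the only set to test is therefore $\emptyset$, and $!_X^{-1}(\emptyset) = \emptyset \in \mathcal{N}_X$ holds trivially. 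Hence $!_X$ is null-preserving, and $!_X^-$ is a genuine $\Prob$-arrow from $\mathbb{0}$ to $\ProbX$.

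Uniqueness then follows immediately: by Definition \ref{defn:cat_prob} the arrows in $\Prob(\mathbb{0}, \ProbX)$ correspond bijectively to null-preserving functions $X \to \{*\}$, and we have just seen there is exactly one such underlying function. I do not anticipate any real obstacle, as the argument is a direct verification; the only subtlety worth flagging is that the reversal of arrow directions is precisely what turns the terminal object $\{*\}$ of $\Set$ into an initial object of $\Prob$.
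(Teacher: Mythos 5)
Your proof is correct and follows essentially the same route as the paper's: uniqueness from the fact that $\{*\}$ admits exactly one set-function from $X$, and the null-preserving condition from the observation that $\emptyset$ is the only null set of $\mathbb{0}$ and its preimage is $\emptyset$. Your explicit verification of measurability is slightly more detailed than the paper's, which simply declares it obvious, but the argument is the same.
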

\begin{proof}
First, we show the uniqueness of
$!_X^-$.
But it is a straightforward consequence from the fact that
there exists only one arrow
$!_X$
from 
$X$
to
$\{*\}$.
Next, we prove that
$!_X^-$
is a
$\Prob$-arrow.
\if \obsolete 1
{\color{red}
All we need to show is that
$
\mathbb{P}_X \circ !_X^{-1}
  \ll
\mathbb{P}_{\mathbb{0}}
$.} 
\fi 
Obviously $!_X$ is measurable, so all we need to show is that $!_X^{-1}$ is null-preserving.
\if \obsolete 1
{\color{red}
Assume that
$
\mathbb{P}_{\mathbb{0}}(A) = 0
$
for some
$
A \in 
\Sigma_{\mathbb{0}}
  :=
\{
   \{*\},
   \emptyset
\}
$.
Then
$A$
must be
$\emptyset$.
Therefore
$
\mathbb{P}_X \circ !_X^{-1}(A)
  =
\mathbb{P}_X( !_X^{-1}(\emptyset))
  =
0
$.} 
\fi 
Since $\emptyset$ is the only null set of $\mathbb{0}$ and any inverse image of $\emptyset$ is also $\emptyset$, we conclude $!_X^{-1}$ is null-preserving.
\end{proof}


In the following discussions,
we fix the state space to be
the measurable space
$
(
 \mathbb{R}, 
 \mathcal{B}(\mathbb{R})
)
$
for simplicity.
$
\mathcal{L}^{\infty}(\ProbNX)
$
is a vector space consisting of 
$\mathbb{R}$-valued random variables
$v$
such that
$
\mathbb{P}_X\mhyphen\esssup_{x \in X}
\abs{v(x)}
  <
\infty
$,
while
$
\mathcal{L}^1(\ProbNX)
$
is a vector space consisting of 
$\mathbb{R}$-valued random variables
$v$
such that
$
\int_X
  \abs{v}
\, d \mathbb{P}_X
$
has a finite value.
For
two random variables
$
u_1
$
and
$
u_2
$,
we write
$
u_1
  \sim_{\mathbb{P}_X}
u_2
$
\; or \;
$
u_1 = u_2
\; \mathbb{P}_X\mhyphen\mathrm{a.s.}
$
when
$
\mathbb{P}_X(
  u_1 \ne u_2
)
  =
0
$.
$
    L^{\infty}(\ProbNX)
$
and
$
    L^1(\ProbNX)
$
are quotient spaces
$
  \mathcal{L}^{\infty}(\ProbNX)
/
  \sim_{\mathbb{P}_X}
$
and
$
  \mathcal{L}^1(\ProbNX)
/
  \sim_{\mathbb{P}_X}
$,
respectively.

\begin{prop}
\label{prop:ar_congr}
Let
$
u_1
$
and
$
u_2 
$
be two elements of
$
  \mathcal{L}^{\infty}(\ProbNX)
$,
and
$
f^-
$
be an arrow
  in
$
\Prob(
  \ProbX,
  \ProbY
)
$.
Then,
$
u_1
  \sim_{\mathbb{P}_X}
u_2
$
implies
$
u_1 \circ f
  \sim_{\mathbb{P}_Y}
u_2 \circ f
$.
\end{prop}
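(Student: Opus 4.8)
The plan is to reduce the statement to the defining null-preserving property of the underlying function via a single set-theoretic identity. First I would isolate the exceptional set on the source side, namely
\[
A := \{\, x \in X : u_1(x) \ne u_2(x) \,\}.
\]
Since $u_1, u_2 \in \mathcal{L}^{\infty}(\ProbX)$ are in particular measurable, their difference is measurable, so $A \in \Sigma_X$; and the hypothesis $u_1 \sim_{\mathbb{P}_X} u_2$ says precisely that $\mathbb{P}_X(A) = 0$, i.e.\ $A \in \mathcal{N}_X$.

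The key observation is that the corresponding exceptional set on the target side is exactly the $f$-preimage of $A$. Recall that the arrow $f^- \in \Prob(\ProbX, \ProbY)$ is carried by a null-preserving function $f : \ProbY \to \ProbX$ (note the reversal of direction), so that each composite $u_i \circ f$ is a well-defined $\mathbb{R}$-valued random variable on $Y$. For $y \in Y$ one has $(u_1 \circ f)(y) \ne (u_2 \circ f)(y)$ if and only if $u_1(f(y)) \ne u_2(f(y))$, i.e.\ if and only if $f(y) \in A$, whence
\[
\{\, y \in Y : (u_1 \circ f)(y) \ne (u_2 \circ f)(y) \,\} = f^{-1}(A).
\]

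Finally I would invoke null-preservation directly: since $A \in \mathcal{N}_X$ and $f$ is null-preserving, $f^{-1}(A) \in \mathcal{N}_Y$, that is $\mathbb{P}_Y(f^{-1}(A)) = 0$. Combined with the displayed identity this reads $\mathbb{P}_Y(u_1 \circ f \ne u_2 \circ f) = 0$, which is exactly $u_1 \circ f \sim_{\mathbb{P}_Y} u_2 \circ f$.

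There is essentially no analytic obstacle here; the one point deserving care is keeping track of the \emph{contravariance} of $f^-$, so that the composite $u_i \circ f$ is formed in the correct order and lands in the space of random variables on $Y$. One could equivalently route the argument through Proposition~\ref{prop:null_pres}, using $\mathbb{P}_Y \circ f^{-1} \ll \mathbb{P}_X$ applied to the null set $A$, but the direct preimage computation is cleaner. I would also remark that the boundedness assumption $u_i \in \mathcal{L}^{\infty}(\ProbX)$ is never used---measurability alone drives the proof---so the proposition in fact holds for arbitrary measurable $u_1, u_2$.
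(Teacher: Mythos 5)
Your proof is correct and essentially identical to the paper's: both reduce the claim to the fact that the $f$-preimage of the $\mathbb{P}_X$-null set $\{u_1 \ne u_2\}$ is $\mathbb{P}_Y$-null, the paper phrasing this via $\mathbb{P}_Y \circ f^{-1} \ll \mathbb{P}_X$ (Proposition~\ref{prop:null_pres}) and you via the null-preserving definition directly, which are equivalent. Your side remarks --- that the exceptional set on $Y$ is exactly, not merely contained in, $f^{-1}(\{u_1 \ne u_2\})$, and that the $\mathcal{L}^{\infty}$ assumption is never used --- are both accurate.
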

\begin{proof}
Assume that
$
u_1
  \sim_{\mathbb{P}_X}
u_2
$.
Then,
$
\mathbb{P}_X(
  u_1 \ne u_2
)
  = 0
$.
%
%
Hence, we have
$
\mathbb{P}_Y\big(
	f^{-1} \{u_1 \ne u_2\}
\big)
	=
(\mathbb{P}_Y \circ f^{-1})
  (u_1 \ne u_2)
	=
0
$
since
$
  \mathbb{P}_Y \circ f^{-1} \ll \mathbb{P}_X
$.
Therefore
$
\mathbb{P}_Y(
  u_1 \circ f
   \ne
  u_2 \circ f
)
  = 0
$
since
$
\{
   u_1 \circ f
\ne
   u_2 \circ f
\}
  \subset
f^{-1}
\{
  u_1 \ne u_2
\}
$,
which means
$
u_1 \circ f
  \sim_{\mathbb{P}_Y}
u_2 \circ f
$.
\end{proof}

Proposition \ref{prop:ar_congr}
makes the following definition
well-defined.

\begin{defn}{[Functor $\mathbf{L}$]}
\label{defn:fun_L}
A functor
$
\mathbf{L} : \Prob \to \Set
$
is defined by:
\begin{equation*}
\xymatrix@C=15 pt@R=30 pt{
    X
  &
    \ProbX
     \ar @{->}_{f^-} [d]
     \ar @{|->}^{\mathbf{L}} [rr]
  &&
    \mathbf{L}\ProbX
     \ar @{}^-{:=} @<-6pt> [r]
     \ar @{->}^{
       \mathbf{L} f^-
     } [d]
  &
    L^{\infty}(\ProbNX)
     \ar @{}^-{\ni} @<-6pt> [r]
  &
    [u]_{\sim_{\mathbb{P}_X}}
     \ar @{|->}^{
       \mathbf{L} f^-
     } @<-9pt> [d]
\\
    Y
      \ar @{->}^f [u]
  &
    \ProbY
     \ar @{|->}^{\mathbf{L}} [rr]
  &&
    \mathbf{L}\ProbY
     \ar @{}^-{:=} @<-6pt> [r]
  &
    L^{\infty}(\ProbNY)
     \ar @{}^-{\ni} @<-6pt> [r]
  &
    [u \circ f]_{\sim_{\mathbb{P}_Y}}
}
\end{equation*}
\end{defn}


\begin{exmp}
\label{exmp:tb1}

Let
$\omega$
be the category 
whose objects are all integers starting with 
$0$
and for each pair of
integers
$s$
and
$t$
with
$s \le t$
there is a unique arrow
$
*_{s, t}
	:
s
	\to
t
$.
That is,
$\omega$
is the category corresponding to the integer set
$\mathbb{N}$
with the usual total order.
For a real number
$p \in (0, 1)$,
we define a functor
$
\mathcal{B}
	:=
\mathcal{B}^p
	: \omega \to \Prob
$
in the following way.

For
an object
$t$
of
$\omega$,
$\mathcal{B}t$
is 
a probability space
$
\bar{X}_t :=
(X_t, \Sigma_t, \mathbb{P}_t)
$
whose components are defined as follows:
\begin{enumerate}
\item
$
X_t := \{
	0, 1
\}^t
$,
the set of all binary numbers of 
$t$
digits,

\item
$
\Sigma_t
	:=
2^{X_t}
$,

\item
for
$a \in X_t$,
$
\mathbb{P}_t
	:
\Sigma_t
	\to
[0,1]
$
is the probability measure defined by
$
\mathbb{P}_t(\{a\})
	:=
p^{\#a}
(1-p)^{t - \#a}
$
where
$\#a$
is the number of occurrences of $1$ in $a$.
\end{enumerate}

For  an integer
$t$,
$
F(
	*_{t, t+1}
)
	:=
f_t
$
is defined by
$
f_t(i_0 i_1 \dots i_t i_{t+1})
	:=
i_0 i_1 \dots i_t
$
where
$i_k$
is
$0$
or
$1$.
For 
$s < t$,
We write
$
f_{s, t}
$
for
$
F(*_{s, t})
	=
f_s
	\circ
f_{s+1}
	\circ
\dots
	\circ
f_{t-1}
$.

Since
$\Sigma_t$
is a powerset of 
$X_t$,
any function from 
$X_t$
is measurable.
Moreover
by the definition of
$\mathbb{P}_t$,
only null set in
$\Sigma_t$
is
$\emptyset$.
Therefore any function between
$X_s$
and
$X_t$
is null-preserving.
Hence,
$f_{s,t}$
is a 
$\Prob$-arrow.
Thus,
the functor
$\mathcal{B}$
is well-defined.
\end{exmp}

The functor
$\mathcal{B}$
represents a filtration over
the classical binomial model,
for example developed in
\cite{shreve_I}.
So we can think 
$\mathcal{B}$
a sort of
\newword{generalized filtration}.

One of the biggest difference between the classical and
$\Prob$ versions of binomial models
is
that the classical version requires the terminal time horizon
$T$
for determining the underlying set
$
\Omega :=
	\{0, 1\}^T
$
while our version does not require it
since the time variant probability spaces can evolve without any limit.
That is, our version allows unknown future elementary events,
which, we believe, shows a big philosophical difference from the Kolmogorov world.


\section{Conditional Expectation Functor}
\label{sec:CE_fun}

\if \obsolete 1
{\color{red}
\begin{thm}
\label{thm:CEf}
Let
$
f^-
$
be an arrow in
$
\Prob(
  \ProbX,
  \ProbY
)
$.
Then, for any
$
v \in
\mathcal{L}^{1}(\ProbNY)
$
there exists a
$
u \in
\mathcal{L}^{1}(\ProbNX)
$
such that
for every
$
A \in \Sigma_X
$
\begin{equation}
\label{eq:RN}
\int_A u \, d \mathbb{P}_X
  =
\int_{f^{-1}(A)} v \, d \mathbb{P}_Y.
\end{equation}
Moreover,
$u$
is determined uniquely up to 
$\mathbb{P}_X$-null sets.
In other words,
if there are two
$
u_1, u_2 \in
\mathcal{L}^{1}(\ProbNX)
$
both satisfying
(\ref{eq:RN}),
then
$
\mathbb{P}_X(u_1 \ne u_2)
  = 0
$
or
$
u_1
  \sim_{\mathbb{P}_X}
u_2
$.

We write a version of this 
$u$
by
$
E^{f^-}(v)
$,
and call it a
\newword{conditional expectation of $v$ along $f^-$}.
Therefore,
\begin{equation}
\int_A 
  E^{f^-}(v)
\, d \mathbb{P}_X
  =
\int_{f^{-1}(A)} v \, d \mathbb{P}_Y.
\end{equation}
\end{thm}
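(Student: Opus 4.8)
The plan is to recognize the right-hand side of the asserted identity as the pushforward along $f$ of the finite signed measure $v \, d\mathbb{P}_Y$, and then to obtain $u$ as a Radon-Nikodym derivative. First I would fix $v \in \mathcal{L}^1(\ProbNY)$ and define a set function $\nu : \Sigma_X \to \mathbb{R}$ by
\begin{equation*}
\nu(A) := \int_{f^{-1}(A)} v \, d\mathbb{P}_Y.
\end{equation*}
This makes sense because $f$ is measurable, so $f^{-1}(A) \in \Sigma_Y$ whenever $A \in \Sigma_X$.

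Next I would check that $\nu$ is a finite signed measure on $(X, \Sigma_X)$. Finiteness follows from $\abs{\nu(A)} \le \int_Y \abs{v} \, d\mathbb{P}_Y < \infty$. For countable additivity, recall that $f^{-1}$ commutes with disjoint unions, so for a disjoint family $\{ A_n \}$ with union $A$ one has $f^{-1}(A) = \bigsqcup_n f^{-1}(A_n)$; countable additivity of $\nu$ then follows from that of the finite signed measure $B \mapsto \int_B v \, d\mathbb{P}_Y$ on $\Sigma_Y$.

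The decisive step, where the null-preserving hypothesis is used, is to show $\nu \ll \mathbb{P}_X$. If $\mathbb{P}_X(A) = 0$, then $A \in \mathcal{N}_X$, so by the defining property of a null-preserving function (equivalently, by Proposition \ref{prop:null_pres}) we obtain $f^{-1}(A) \in \mathcal{N}_Y$, i.e.\ $\mathbb{P}_Y(f^{-1}(A)) = 0$; integrating $v$ over a $\mathbb{P}_Y$-null set gives $\nu(A) = 0$. Since $\nu$ and $\mathbb{P}_X$ are both finite and $\nu \ll \mathbb{P}_X$, the Radon-Nikodym theorem provides a $u \in \mathcal{L}^1(\ProbNX)$ with $\nu(A) = \int_A u \, d\mathbb{P}_X$ for every $A \in \Sigma_X$, which is exactly the claimed identity. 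For uniqueness, if $u_1$ and $u_2$ both satisfy it, then $\int_A (u_1 - u_2) \, d\mathbb{P}_X = 0$ for all $A \in \Sigma_X$; testing against $A = \{ u_1 > u_2 \}$ and $A = \{ u_1 < u_2 \}$ forces $u_1 \sim_{\mathbb{P}_X} u_2$.

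Since the statement is in essence an instance of the Radon-Nikodym theorem, I do not expect a genuine obstacle. The only real content is the bookkeeping that converts the null-preserving condition into the absolute continuity $\nu \ll \mathbb{P}_X$ required to invoke that theorem, together with the verification that $\nu$ is honestly a countably additive signed measure, on which the whole reduction rests.
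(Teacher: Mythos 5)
Your proposal is correct and follows essentially the same route as the paper: both define the pushforward signed measure $A \mapsto \int_{f^{-1}(A)} v \, d\mathbb{P}_Y$ (the paper's $v^* \circ f^{-1}$), establish its absolute continuity with respect to $\mathbb{P}_X$ via the null-preserving property, and invoke the Radon--Nikodym theorem, with uniqueness handled by the standard positivity argument (the paper just spells out the $\{u_1 - u_2 > 1/n\}$ exhaustion that your ``testing against $A = \{u_1 > u_2\}$'' step implicitly relies on). The only cosmetic difference is that the paper derives absolute continuity through the chain $v^* \circ f^{-1} \ll \mathbb{P}_Y \circ f^{-1} \ll \mathbb{P}_X$ rather than checking it directly on $\mathbb{P}_X$-null sets.
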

\begin{proof}
Let us prove the uniqueness first.
Assume that both
$u_1$
and
$u_2
  \in
\mathcal{L}^{1}(\ProbNX)
$
satisfy
(\ref{eq:RN}),
and that
$
\mathbb{P}_X(u_1 \ne u_2) > 0
$.
Then
$
\int_A u_1 \, d \mathbb{P}_X
   =
\int_A u_2 \, d \mathbb{P}_X
$.
Now let
$
w
  :=
u_1 - u_2
$.
Then
$
\mathbb{P}_X(w > 0)
  +
\mathbb{P}_X(w < 0)
  =
\mathbb{P}_X(w \ne 0)
  > 0
$.
Therefore
$
\mathbb{P}_X(w > 0)
  > 0
$
or
$
\mathbb{P}_X(w < 0)
  > 0
$.
In the former case,
there exists
$
n \in \mathbb{N}
$
such that
$
\mathbb{P}_X(w > \frac{1}{n})
  > 0
$
since
$
\{ w > 0 \}
  =
\uparrow\lim \{ w > \frac{1}{n} \}
$
and the monotone convergence property of measures.
But, then
$
0
  =
\int_{\{ w > \frac{1}{n}\}}
  w \,
d \mathbb{P}_X
  \ge
\frac{1}{n}
\mathbb{P}_X(w > \frac{1}{n})
  > 0
$,
which is a contradiction.
Hence
$
\mathbb{P}_X(w > 0) = 0
$.
A similar discussion works for the latter case.
Therefore,
$
\mathbb{P}_X(u_1 \ne u_2) = 0
$.

Now,
for
$
v \in
\mathcal{L}^{1}(\ProbNY)
$,
define a function
$
v^*
  :
\Sigma_Y \to \mathbb{R}
$
by
$
v^*(B)
  :=
\int_B v \, d \mathbb{P}_Y
$
for
$
B
  \in
\Sigma_Y
$.
Then,
$
v^*
$
is a signed measure
satisfying
$
v^* \ll \mathbb{P}_Y
$.
Therefore,
$
v^* \circ f^{-1} 
  \ll
\mathbb{P}_Y \circ f^{-1}
  \ll
\mathbb{P}_X
$.
So by Radon-Nikodym theorem,
there exists 
$
u \in
\mathcal{L}^{1}(\ProbNX)
$
such that
$
\int_A 
  u \,
d \mathbb{P}_X
  =
\int_A \,
d (v^* \circ f^{-1})
$
for any
$
A \in \Sigma_X
$,
where
$u$
is unique
up to 
$\mathbb{P}_X$-null sets.
Hence
\begin{equation*}
\int_A 
  u \,
d \mathbb{P}_X
  =
\int_A \,
d (v^* \circ f^{-1})
  =
(v^* \circ f^{-1})(A)
  =
v^* ( f^{-1}(A))
  =
\int_{f^{-1}(A)}
  v
\, d \mathbb{P}_Y .
\end{equation*}

\end{proof}} 
\fi 
\begin{defn}
Let us consider a $\Prob$ arrow $f^- : \ProbX \to \ProbY$. Take $v \in \mathcal{L}^1(\ProbNY)$ and put
\[
 v^*(B) := \int_B v \, d\mathbb{P}_Y
\]
for $B \in \Sigma_Y$. Then $v^* \circ f^{-1}$ is absolutely continuous w.r.t. $\mathbb{P}_X$, since $f^{-1}$ maps $\mathbb{P}_X$-null sets to $\mathbb{P}_Y$-null sets and
\[
 v^* \circ f^{-1}(A) = \int_{f^{-1}(A)} v \, d\mathbb{P}_Y \ (A \in \Sigma_X).
\]
So, thanks to Radon-Nikodym theorem, we have the unique (up to $\mathbb{P}_X$-a.s.) element $E^{f^-}(v)$ of $\mathcal{L}^1(\ProbNX)$ such that
\begin{equation}
 \label{eq:RN_E}
 \int_A E^{f^-}(v) \, d\mathbb{P}_X = \int_{f^{-1}(A)} v \, d\mathbb{P}_Y
\end{equation}
for all $A \in \Sigma_X$. We call this element $E^{f^-}(v)$ \newword{the conditional expectation of $v$ along $f^-$}.
\end{defn}

\begin{prop}
\label{prop:E_id}
For
$
u \in 
\mathcal{L}^1(\ProbNX)
$,
$
E^{Id_X^-}(u)
  \sim_{\mathbb{P}_X}
u
$.
\end{prop}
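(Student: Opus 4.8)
The plan is to apply the defining property \eqref{eq:RN_E} of the conditional expectation to the identity arrow and then invoke the uniqueness clause already built into that definition. Since $Id_X^-$ is the $\Prob$-arrow corresponding to the identity measurable function $Id_X : \ProbX \to \ProbX$, its set-theoretic inverse acts trivially, that is, $Id_X^{-1}(A) = A$ for every $A \in \Sigma_X$. Here the source and target of the arrow coincide, so the input $u \in \mathcal{L}^1(\ProbNX)$ plays the role of $v$ in the definition and $E^{Id_X^-}(u)$ again lives in $\mathcal{L}^1(\ProbNX)$.

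First I would specialize \eqref{eq:RN_E} with $f = Id_X$, obtaining
\[
 \int_A E^{Id_X^-}(u) \, d\mathbb{P}_X
   = \int_{Id_X^{-1}(A)} u \, d\mathbb{P}_X
   = \int_A u \, d\mathbb{P}_X
\]
for all $A \in \Sigma_X$. Next I would observe that $u$ itself trivially satisfies this same identity, being its own Radon-Nikodym derivative with respect to $\mathbb{P}_X$, so that $u$ is a legitimate version of the conditional expectation along $Id_X^-$. Finally, the uniqueness (up to $\mathbb{P}_X$-a.s. equality) asserted in the definition of $E^{f^-}$, which rests on the uniqueness part of the Radon-Nikodym theorem, forces $E^{Id_X^-}(u) \sim_{\mathbb{P}_X} u$, as claimed.

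There is essentially no hard step here; the proposition is a sanity check confirming that the generalized conditional expectation reduces to the identity on the identity arrow. The only point requiring care is the bookkeeping of directions, namely recognizing that $Id_X^-$ sends $\ProbX$ to $\ProbX$ with underlying function $Id_X$, so that $Id_X^{-1}$ is the identity on $\Sigma_X$ and the right-hand integral of \eqref{eq:RN_E} collapses to the integral over $A$ itself.
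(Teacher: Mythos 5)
Your proof is correct and follows essentially the same route as the paper: specialize the defining identity \eqref{eq:RN_E} to $f = Id_X$, note that $Id_X^{-1}(A) = A$, and conclude $E^{Id_X^-}(u) \sim_{\mathbb{P}_X} u$ by the a.s.\ uniqueness built into the definition. You merely make the final uniqueness step explicit where the paper leaves it implicit.
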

\begin{proof}
For every 
$A \in \Sigma_X$,
$
\int_A
  E^{Id_X^-}(u)
\, d \mathbb{P}_X 
  =
\int_{Id_X^{-1}(A)}
  u
\, d \mathbb{P}_X 
  =
\int_{A}
  u
\, d \mathbb{P}_X 
$.
\end{proof}

\begin{prop}
\label{prop:fun_E}
Let
$f^-$
and
$g^-$
be arrows in 
$\Prob$
like:
\begin{equation*}
\xymatrix{
  \ProbX
       \ar @{->}^{f^-} [r]
 &
  \ProbY
       \ar @{->}^{g^-} [r]
 &
  \ProbZ
}.
\end{equation*}
\begin{enumerate}
\item
For
$
v_1, v_2
  \in
\mathcal{L}^{1}(\ProbNY)
$,
$
v_1
  \sim_{\mathbb{P}_Y}
v_2
$
implies
$
E^{f^-}(v_1)
  \sim_{\mathbb{P}_X}
E^{f^-}(v_2)
$.

\item
For
$
w
  \in
\mathcal{L}^{1}(\ProbNZ)
$,
$
E^{f^-}(E^{g^-}(w))
  \sim_{\mathbb{P}_X}
E^{g^- \circ f^-}(w)
$.
\end{enumerate}
\end{prop}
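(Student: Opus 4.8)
The plan is to reduce both parts to the uniqueness clause built into the definition of the conditional expectation: if $u_1, u_2 \in \mathcal{L}^1(\ProbNX)$ satisfy $\int_A u_1 \, d\mathbb{P}_X = \int_A u_2 \, d\mathbb{P}_X$ for every $A \in \Sigma_X$, then $u_1 \sim_{\mathbb{P}_X} u_2$. This is exactly the uniqueness granted by the Radon-Nikodym theorem that is used to produce $E^{f^-}$ in \eqref{eq:RN_E}. Both statements then become a matter of checking that two elements of $\mathcal{L}^1(\ProbNX)$ induce the same integral over every $A \in \Sigma_X$, so I would isolate this uniqueness principle at the outset and apply it twice.

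For (1), I would start from the defining identity \eqref{eq:RN_E} applied to $v_1$ and $v_2$, which expresses $\int_A E^{f^-}(v_i) \, d\mathbb{P}_X$ as $\int_{f^{-1}(A)} v_i \, d\mathbb{P}_Y$. The hypothesis $v_1 \sim_{\mathbb{P}_Y} v_2$ means $\{v_1 \ne v_2\}$ is $\mathbb{P}_Y$-null, so $v_1$ and $v_2$ have equal integrals over every measurable set, in particular over each $f^{-1}(A)$. Hence $\int_A E^{f^-}(v_1) \, d\mathbb{P}_X = \int_A E^{f^-}(v_2) \, d\mathbb{P}_X$ for all $A \in \Sigma_X$, and the uniqueness principle yields $E^{f^-}(v_1) \sim_{\mathbb{P}_X} E^{f^-}(v_2)$. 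This part is essentially the assertion that $E^{f^-}$ descends to $\sim$-equivalence classes, and is the same flavour of well-definedness argument as Proposition \ref{prop:ar_congr}.

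For (2), the tower property, I would chain the two defining identities. Writing $B := f^{-1}(A)$, the identity for $E^{f^-}$ gives $\int_A E^{f^-}(E^{g^-}(w)) \, d\mathbb{P}_X = \int_{f^{-1}(A)} E^{g^-}(w) \, d\mathbb{P}_Y$, and the identity for $E^{g^-}$ applied to $B$ rewrites the right-hand side as $\int_{g^{-1}(f^{-1}(A))} w \, d\mathbb{P}_Z$. The crucial bookkeeping step is that a $\Prob$-arrow corresponds to a measurable map pointing the opposite way, so the composite $g^- \circ f^- : \ProbX \to \ProbZ$ is the symbol attached to the set-function $f \circ g : Z \to X$; consequently $(f \circ g)^{-1}(A) = g^{-1}(f^{-1}(A))$. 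Feeding this into the defining identity for $E^{g^- \circ f^-}$ shows $\int_A E^{g^- \circ f^-}(w) \, d\mathbb{P}_X = \int_{g^{-1}(f^{-1}(A))} w \, d\mathbb{P}_Z$ as well, so both candidates agree as integrals over every $A \in \Sigma_X$, and the uniqueness principle finishes the argument.

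I do not expect a serious obstacle; the content is routine once \eqref{eq:RN_E} is in hand. The one place demanding care — and the step I would double-check — is the direction convention: because arrows in $\Prob$ reverse the underlying measurable maps, it is easy to conflate $f \circ g$ with $g \circ f$ and thereby misidentify which inverse image appears. Getting $(f \circ g)^{-1} = g^{-1} \circ f^{-1}$ correctly aligned with the composite arrow $g^- \circ f^-$ is the linchpin of part (2). A minor secondary point worth recording is that $E^{g^-}(w) \in \mathcal{L}^1(\ProbNY)$ by construction, so $E^{f^-}(E^{g^-}(w))$ is legitimately defined before the computation begins.
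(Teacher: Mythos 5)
Your proposal is correct and follows essentially the same route as the paper: part (1) is the observation that $v_1^* = v_2^*$ (equal integrals over every set, hence over every $f^{-1}(A)$) combined with Radon--Nikodym uniqueness, and part (2) is obtained by applying the defining identity \eqref{eq:RN_E} twice together with $(f \circ g)^{-1} = g^{-1} \circ f^{-1}$. Your explicit attention to the direction convention ($g^- \circ f^-$ corresponding to $f \circ g$) is exactly the right point to be careful about and matches the paper's usage.
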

\begin{proof}
\begin{enumerate}
\item
Assume that
$
v_1
  \sim_{\mathbb{P}_Y}
v_2
$.
Then, it is obvious that
$
v_1^*
  =
v_2^*
$
as functions.
The result comes from the uniqueness (up to 
$\mathbb{P}_X$-null sets)
of conditional expectations.

\item
It is sufficient to show that
for every
$
A \in \Sigma_X
$
\begin{equation*}
\int_A
  E^{f^-}(E^{g^-}(w))
\, d \mathbb{P}_X
  =
\int_{(f \circ g)^{-1}(A)}
  w \,
d \mathbb{P}_Z .
\end{equation*}
However, 
we can get this immediately by
applying 
(\ref{eq:RN_E}) twice.

\end{enumerate}
\end{proof}

Proposition \ref{prop:E_id}
and
Proposition \ref{prop:fun_E}
make the following definition
well-defined.

\begin{defn}{[Functor $\mathcal{E}$]}
\label{defn:fun_E}
A functor
$
\mathcal{E} : \Prob^{op} \to \Set
$
is defined by:
\begin{equation*}
\xymatrix@C=15 pt@R=30 pt{
    X
  &
    \ProbX
     \ar @{->}_{f^-} [d]
     \ar @{|->}^{\mathcal{E}} [rr]
  &&
    \mathcal{E}\ProbX
     \ar @{}^-{:=} @<-6pt> [r]
  &
    L^{1}(\ProbNX)
     \ar @{}^-{\ni} @<-6pt> [r]
  &
    [E^{f^-}(v)]_{\sim_{\mathbb{P}_X}}
\\
    Y
      \ar @{->}^f [u]
  &
    \ProbY
     \ar @{|->}^{\mathcal{E}} [rr]
  &&
    \mathcal{E}\ProbY
     \ar @{}^-{:=} @<-6pt> [r]
     \ar @{->}_{
       \mathcal{E} f^-
     } [u]
  &
    L^{1}(\ProbNY)
     \ar @{}^-{\ni} @<-6pt> [r]
  &
    [v]_{\sim_{\mathbb{P}_Y}}
     \ar @{|->}_{
       \mathcal{E} f^-
     } @<9pt> [u]
}
\end{equation*}
We call
$\mathcal{E}$
a 
\newword{conditional expectation functor}.
\end{defn}

Note that
the functors
$L$
and
$\mathcal{E}$
defined in
\cite{adachi_2014crm}
from the category
$
\chi(\Omega, \mathcal{G})
$
to
$\Set$
are representable as
$\mathbf{L} \circ \iota$
and
$\mathcal{E} \circ \iota$,
respectively
by using 
$\mathbf{L}$
and
$\mathcal{E}$
defined in this paper.
That is,
$\Prob$
is a more general and richer
category
than
$\chi$,
while
still having enough structure to define
conditional expectation functor.

One may wonder why we do not use more structured category 
such as the category of Banach spaces instead of using 
$\Set$.
One of our hidden goals when we defined the functors 
$\mathbf{L}$
and
$\mathcal{E}$
is
to develop a model of a logical inference system based on 
$\Prob$.
In order to make it possible,
we wanted to make the functor category over
$\Prob$
be a topos.
Picking 
$\Set$
as a target category is
a natural consequence of this line
since
the functor category
$
\Set^{\Prob}
$
becomes a topos.


The following three propositions
state
basic properties of
our conditional expectations,
which are similar to those of
classical conditional expectations.


\begin{prop}{[Linearity]}
\label{prop:lin}
Let
$
f^-
  :
\ProbX
  \to
\ProbY
$
be a 
$\Prob$-arrow.
Then
for every pair of 
random variables
$
u, v
  \in
 \mathcal{L}^1(\ProbNY)
$
and
$
\alpha ,\beta
\in \mathbf{R}
$,
we have
\begin{equation}
\label{eq:lin}
E^{f^-} (
    \alpha u + \beta v
)
   \sim_{\mathbb{P}_X}
\alpha E^{f^-}(u) + \beta E^{f^-}(v) .
\end{equation}
\end{prop}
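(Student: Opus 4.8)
The plan is to exploit the defining characterization of $E^{f^-}$ furnished by the Radon-Nikodym equation (\ref{eq:RN_E}), together with the uniqueness clause attached to it. Since $\mathcal{L}^1(\ProbNY)$ is a vector space, $\alpha u + \beta v$ again lies in $\mathcal{L}^1(\ProbNY)$, so $E^{f^-}(\alpha u + \beta v)$ is well-defined and is, by construction, the unique element of $\mathcal{L}^1(\ProbNX)$ (up to $\mathbb{P}_X$-a.s.\ equality) satisfying (\ref{eq:RN_E}) with integrand $\alpha u + \beta v$. The strategy is therefore to show that the candidate $\alpha E^{f^-}(u) + \beta E^{f^-}(v)$ satisfies that same integral identity, and then to invoke uniqueness.

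First I would note that $\alpha E^{f^-}(u) + \beta E^{f^-}(v)$ belongs to $\mathcal{L}^1(\ProbNX)$, since that space is closed under linear combinations. Next, for an arbitrary $A \in \Sigma_X$, I would compute
\begin{align*}
\int_A \big( \alpha E^{f^-}(u) + \beta E^{f^-}(v) \big) \, d\mathbb{P}_X
&= \alpha \int_A E^{f^-}(u) \, d\mathbb{P}_X + \beta \int_A E^{f^-}(v) \, d\mathbb{P}_X \\
&= \alpha \int_{f^{-1}(A)} u \, d\mathbb{P}_Y + \beta \int_{f^{-1}(A)} v \, d\mathbb{P}_Y \\
&= \int_{f^{-1}(A)} \big( \alpha u + \beta v \big) \, d\mathbb{P}_Y ,
\end{align*}
where the first and last equalities are simply linearity of the Lebesgue integral against $\mathbb{P}_X$ and $\mathbb{P}_Y$ respectively, and the middle equality applies (\ref{eq:RN_E}) once to $u$ and once to $v$.

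Since this identity holds for every $A \in \Sigma_X$, the element $\alpha E^{f^-}(u) + \beta E^{f^-}(v)$ satisfies the defining equation for $E^{f^-}(\alpha u + \beta v)$. By the uniqueness (up to $\mathbb{P}_X$-null sets) built into the Radon-Nikodym derivative, the two must coincide $\mathbb{P}_X$-almost surely, which is precisely (\ref{eq:lin}).

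There is no serious obstacle here: the argument is a direct verification resting entirely on linearity of the integral and uniqueness in the Radon-Nikodym theorem. The only point demanding a moment's care is the bookkeeping that guarantees every quantity involved is a genuine element of the relevant $\mathcal{L}^1$ space before uniqueness can be applied — that is, confirming that both $\alpha u + \beta v$ and $\alpha E^{f^-}(u) + \beta E^{f^-}(v)$ are integrable — which follows from $\mathcal{L}^1(\ProbNY)$ and $\mathcal{L}^1(\ProbNX)$ being vector spaces.
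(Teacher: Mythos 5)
Your proof is correct and follows essentially the same route as the paper: both verify that $\alpha E^{f^-}(u) + \beta E^{f^-}(v)$ satisfies the defining integral identity (\ref{eq:RN_E}) for $\alpha u + \beta v$ via linearity of the integral, and then conclude by the uniqueness of the Radon--Nikodym derivative up to $\mathbb{P}_X$-null sets. The only difference is cosmetic --- you run the chain of equalities from the candidate toward the integrand, while the paper runs it from $\int_A E^{f^-}(\alpha u+\beta v)\,d\mathbb{P}_X$ outward --- and your explicit attention to integrability of the combinations is a welcome, if routine, addition.
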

\begin{proof}
For all 
$A \in \Sigma_X$,
\begin{align*}
\int_A E^{f^-}
  ( \alpha u + \beta v )
\, d \mathbb{P}_X
    &=
\int_{f^{-1}(A)}
  ( \alpha u + \beta v ) 
\, d \mathbb{P}_Y
    \\&=
\alpha
\int_{f^{-1} (A)}
   u
\, d \mathbb{P}_Y
   +
\beta
\int_{f^{-1}(A)}
   v
\, d \mathbb{P}_Y
    \\&=
\alpha
\int_A
   E^{f^-} (u)
 \, d \mathbb{P}_X
   +
\beta
\int_A
   E^{f^-}(v)
\, d \mathbb{P}_X
    \\&=
\int_A
   (\alpha E^{f^-}(u) + \beta E^{f^-}(v))
\, d \mathbb{P}_X .
\qedhere
\end{align*}
\end{proof}

\begin{prop}{[Positivity]}
\label{prop:pos}
Let
$
f^-
  :
\ProbX
  \to
\ProbY
$
be a 
$\Prob$-arrow.
If 
a random variable
$
v \in \mathcal{L}^1(\ProbNY)
$
is
$\mathbb{P}_Y$-almost surely positive,
i.e. 
$
v \ge 0
\  (\mathbb{P}_Y \text{-a.s.})
$,
then
$
E^{f^-} (v)
  \ge 0
\  (\mathbb{P}_X \text{-a.s.})
$.
\end{prop}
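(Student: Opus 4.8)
The plan is to run the same level-set argument used for the uniqueness part of the conditional expectation construction, now applied to the sign of $E^{f^-}(v)$. Writing $u := E^{f^-}(v)$, I would argue by contradiction: assume $u$ fails to be nonnegative $\mathbb{P}_X$-a.s., pick a test set inside the region $\{u < 0\}$ on which $u$ is bounded away from $0$, and feed it into the defining relation (\ref{eq:RN_E}) to contradict the hypothesis $v \ge 0$.

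Concretely, suppose $\mathbb{P}_X(u < 0) > 0$. Since $\{u < 0\} = \bigcup_{n} \{u < -\tfrac{1}{n}\}$ is an increasing union, the monotone convergence property of measures gives an $n \in \mathbb{N}$ with $\mathbb{P}_X(u < -\tfrac{1}{n}) > 0$. Set $A := \{u < -\tfrac{1}{n}\} \in \Sigma_X$. On the one hand, the integrand on $A$ is dominated by $-\tfrac{1}{n}$, so
\[
 \int_A u \, d\mathbb{P}_X \le -\tfrac{1}{n}\,\mathbb{P}_X(A) < 0.
\]
On the other hand, the defining equation (\ref{eq:RN_E}) yields $\int_A u \, d\mathbb{P}_X = \int_{f^{-1}(A)} v \, d\mathbb{P}_Y \ge 0$, the last inequality holding because $v \ge 0$ off a $\mathbb{P}_Y$-null set. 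These two statements are incompatible, forcing $\mathbb{P}_X(u < 0) = 0$, which is exactly $E^{f^-}(v) \ge 0$ $\mathbb{P}_X$-a.s.

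The argument is routine and essentially self-contained once (\ref{eq:RN_E}) is in hand; there is no serious obstacle. The one step that deserves attention — and the closest thing to a subtlety — is the passage to a level set $\{u < -\tfrac{1}{n}\}$ on which $u$ is bounded away from $0$, since the raw inequality $\int_{\{u<0\}} u\, d\mathbb{P}_X \le 0$ is not by itself strict; the monotone convergence of measures is what upgrades the mere positivity of $\mathbb{P}_X(u<0)$ into a usable strict estimate. I expect no difficulty with measurability, as $A$ is a level set of the measurable function $u$, and no further use of null-preservation is needed here beyond what already underlies the existence of $E^{f^-}(v)$.
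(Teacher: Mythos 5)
Your argument is correct, but it takes a different route from the paper. The paper's proof observes that, because $v \ge 0$ $\mathbb{P}_Y$-a.s., the set function $A \mapsto v^* \circ f^{-1}(A) = \int_{f^{-1}(A)} v \, d\mathbb{P}_Y$ is a genuine (unsigned) measure on $(X, \Sigma_X)$, and then simply quotes the standard fact that the Radon--Nikodym derivative of a measure with respect to a measure is nonnegative almost everywhere. You instead re-derive that positivity fact from scratch out of the defining relation (\ref{eq:RN_E}), via the level-set contradiction: if $\mathbb{P}_X(u<0)>0$ then some $A=\{u<-\tfrac1n\}$ has positive measure, giving $\int_A u\,d\mathbb{P}_X \le -\tfrac1n\mathbb{P}_X(A) < 0$ against $\int_{f^{-1}(A)} v\,d\mathbb{P}_Y \ge 0$. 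This is exactly the technique the paper uses elsewhere for the uniqueness of $E^{f^-}(v)$, so your proof is more self-contained and elementary, at the cost of being longer; the paper's version is shorter but leans on an external property of Radon--Nikodym derivatives. Both are complete and correct, and your identification of the one genuine subtlety (upgrading $\mathbb{P}_X(u<0)>0$ to a strict integral estimate via the increasing union $\{u<0\}=\bigcup_n\{u<-\tfrac1n\}$) is exactly right.
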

\begin{proof}
Since 
$v$
is
$\mathbb{P}_Y$-almost surely positive,
$
v^* \circ f^{-1}(A)
  =
\int_{f^{-1}(A)}
   v
\, d \mathbb{P}_Y
$
is a measure on
$(X, \Sigma_X)$
for every
$
A \in \Sigma_X
$.
Thus
$
E^{f^-}(v) \ge 0 
\;
(\mathbb{P}_X \text{-a.s.})
$
because
$
E^{f^-}(v)
$
is a Radon-Nikodym derivative
$
d (v^* \circ f^{-1})
 /
d \mathbb{P}_X
$.
\end{proof}

\begin{prop}{[Monotone Convergence]}
\label{prop:mc}
Let
$
f^-
  :
\ProbX
  \to
\ProbY
$
be a 
$\Prob$-arrow,
$
v,
v_n
   \in
\mathcal{L}^1(\ProbNY)
$
be random variables
for
$n \in \mathbf{N}$.
If
$
0
  \le
v_n
  \uparrow
v
\ (\mathbb{P}_Y \text{-a.s.})
$,
then
$
0
   \le
E^{f^-}(v_n)
   \uparrow
E^{f^-}(v)
\ (\mathbb{P}_X \text{-a.s.})
$.
\end{prop}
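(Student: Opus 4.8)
The plan is to reduce everything to the defining relation (\ref{eq:RN_E}) together with the classical Monotone Convergence Theorem applied on both sides, using the already-established Positivity (Proposition \ref{prop:pos}) and Linearity (Proposition \ref{prop:lin}) to control the sequence $E^{f^-}(v_n)$. First I would show that the sequence $\big(E^{f^-}(v_n)\big)_n$ is, up to a single $\mathbb{P}_X$-null set, non-negative and non-decreasing. Non-negativity of each $E^{f^-}(v_n)$ is immediate from Proposition \ref{prop:pos}, since $v_n \ge 0$ ($\mathbb{P}_Y$-a.s.). For monotonicity, note that $v_{n+1} - v_n \ge 0$ ($\mathbb{P}_Y$-a.s.), so Proposition \ref{prop:pos} gives $E^{f^-}(v_{n+1} - v_n) \ge 0$ ($\mathbb{P}_X$-a.s.), and Proposition \ref{prop:lin} rewrites this as $E^{f^-}(v_n) \le E^{f^-}(v_{n+1})$ ($\mathbb{P}_X$-a.s.). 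Each such statement holds off its own $\mathbb{P}_X$-null set; taking the countable union of these exceptional sets yields a single $\mathbb{P}_X$-null set $N$ off which the whole sequence is non-negative and non-decreasing. On $X \setminus N$ I may then define the pointwise limit $w := \lim_n E^{f^-}(v_n) \in [0, \infty]$.

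Next I would identify $w$ with $E^{f^-}(v)$. Fixing $A \in \Sigma_X$ and starting from (\ref{eq:RN_E}) applied to $v_n$,
\[
 \int_A E^{f^-}(v_n)\, d\mathbb{P}_X = \int_{f^{-1}(A)} v_n \, d\mathbb{P}_Y ,
\]
I would pass to the limit on each side. On the right $0 \le v_n \uparrow v$ holds $\mathbb{P}_Y$-a.s., so the classical Monotone Convergence Theorem gives convergence to $\int_{f^{-1}(A)} v\, d\mathbb{P}_Y = \int_A E^{f^-}(v)\, d\mathbb{P}_X$. On the left $0 \le E^{f^-}(v_n) \uparrow w$ holds $\mathbb{P}_X$-a.s., so the classical Monotone Convergence Theorem gives convergence to $\int_A w\, d\mathbb{P}_X$. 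Taking $A = X$ first shows $\int_X w\, d\mathbb{P}_X = \int_X E^{f^-}(v)\, d\mathbb{P}_X < \infty$, so that $w \in \mathcal{L}^1(\ProbNX)$ and $w$ is finite $\mathbb{P}_X$-a.s.; for general $A$ the two limits must coincide, giving $\int_A w \, d\mathbb{P}_X = \int_A E^{f^-}(v)\, d\mathbb{P}_X$ for every $A \in \Sigma_X$.

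Finally, since $w$ and $E^{f^-}(v)$ are two elements of $\mathcal{L}^1(\ProbNX)$ whose integrals agree over every $A \in \Sigma_X$, the uniqueness (up to $\mathbb{P}_X$-null sets) of the Radon-Nikodym derivative, which is exactly the uniqueness underlying the definition of $E^{f^-}$, forces $w \sim_{\mathbb{P}_X} E^{f^-}(v)$. Combined with the monotonicity established in the first step, this yields $0 \le E^{f^-}(v_n) \uparrow E^{f^-}(v)$ ($\mathbb{P}_X$-a.s.), as required.

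I expect the only genuinely delicate point to be the null-set bookkeeping in the first step: each application of Propositions \ref{prop:pos} and \ref{prop:lin} holds only almost surely, so I must consolidate countably many $\mathbb{P}_X$-null exceptional sets into a single one before the pointwise limit $w$ is even meaningful. Once that is arranged, the two invocations of the classical Monotone Convergence Theorem do all the real work, and the null-preserving (absolute-continuity) nature of $f^-$ enters only implicitly, through the existence and the defining property of $E^{f^-}$.
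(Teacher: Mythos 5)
Your proposal is correct and follows essentially the same route as the paper's proof: establish $0 \le E^{f^-}(v_n) \le E^{f^-}(v_{n+1})$ ($\mathbb{P}_X$-a.s.) via Positivity and Linearity, form the a.s.\ limit (the paper uses $h := \limsup_n E^{f^-}(v_n)$ where you consolidate null sets explicitly), and identify it with $E^{f^-}(v)$ by applying the classical Monotone Convergence Theorem to both sides of the defining relation (\ref{eq:RN_E}) and invoking a.s.\ uniqueness. Your extra care with the null-set bookkeeping and the integrability check at $A = X$ only makes explicit what the paper leaves implicit.
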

\begin{proof}
By 
Proposition \ref{prop:lin}
 and
Proposition \ref{prop:pos},
we have
$
0
   \le
E^{f^-}(v_n)
   \le
E^{f^-}(v_{n+1})
\ (\mathbb{P}_X \text{-a.s.})
$
for all
$n \in \mathbf{N}$.
Put
$
h
  :=
\limsup_n E^{f^-}(v_n)
$,
then obviously,
$
E^{f^-}(v_n)
   \uparrow
h
\ (\mathbb{P}_X \text{-a.s.}).
$
So all we need to show is that
for every
$A \in \Sigma_X$,
$
\int_A
   E^{f^-}(v)
\, d \mathbb{P}_X
   =
\int_A
   h
\, d \mathbb{P}_X .
$
But, thanks to monotone convergence theorem,
we have
\begin{align*}
\int_A
   h
\, d \mathbb{P}_X
   &=
\lim_{n \rightarrow \infty}
   \int_A
      E^{f^-}(v_n)
   \, d \mathbb{P}_X
   =
\lim_{n \rightarrow \infty}
   \int_{f^{-1}(A)}
      v_n
   \, d \mathbb{P}_Y
    \\&=
\int_{f^{-1}(A)}
    v
\, d \mathbb{P}_Y
    =
\int_A
   E^{f^-}(v)
\, d \mathbb{P}_X.
\qedhere
\end{align*}
\end{proof}

\begin{defn}{[Unconditional Expectation]}
\label{defn:uncondExp}
For
$
v
  \in
\mathcal{L}^{1}(\ProbNY),
$
we call
$
E^{!_Y^-}(v)
$
a 
\newword{unconditional expectation}
of
$v$,
where
$
!_Y^-
$
is the unique arrow
$
!_Y^-
  :
\mathbb{0}
  \to
\ProbY
$ defined in
Proposition \ref{prop:init_obj}.
\end{defn}

\begin{prop}
\label{prop:uncond_exp}
Let 
$
!_Y^-
  :
\mathbb{0}
  \to
\ProbY
$
be the
unique
$\Prob$-arrow
and
$
v
  \in
\mathcal{L}^{1}(\ProbNY)
$.
Then, we have
\begin{equation}
\label{eq:uncond_val}
E^{!_Y^-}(v)(*)
  =
\mathbb{E}^{\mathbb{P}_Y}[v].
\end{equation}
\end{prop}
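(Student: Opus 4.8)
The plan is to read the value $E^{!_Y^-}(v)(*)$ directly off the defining Radon--Nikodym identity (\ref{eq:RN_E}), exploiting that the source probability space is the one-point space $\mathbb{0}$. Recall that $!_Y^-$ corresponds to the measurable function $!_Y : Y \to \{*\}$, so in the notation of (\ref{eq:RN_E}) the source object plays the role of $\ProbX = \mathbb{0}$, and hence $E^{!_Y^-}(v)$ is an element of $\mathcal{L}^1(\mathbb{0})$, i.e. a real-valued function on the singleton $\{*\}$.

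First I would instantiate (\ref{eq:RN_E}) at the only nontrivial measurable set $A = \{*\} \in \Sigma_{\mathbb{0}}$, which gives
\[
 \int_{\{*\}} E^{!_Y^-}(v) \, d\mathbb{P}_{\mathbb{0}} = \int_{!_Y^{-1}(\{*\})} v \, d\mathbb{P}_Y .
\]
Next I would evaluate each side separately. On the right, since $!_Y(y) = *$ for every $y \in Y$, we have $!_Y^{-1}(\{*\}) = Y$, so the right-hand integral is exactly $\int_Y v \, d\mathbb{P}_Y = \mathbb{E}^{\mathbb{P}_Y}[v]$. On the left, because $E^{!_Y^-}(v)$ is a function on the one-point space and $\mathbb{P}_{\mathbb{0}}(\{*\}) = 1$, the integral collapses to $E^{!_Y^-}(v)(*) \cdot \mathbb{P}_{\mathbb{0}}(\{*\}) = E^{!_Y^-}(v)(*)$. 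Comparing the two sides yields (\ref{eq:uncond_val}).

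There is essentially no obstacle here: the argument is a direct specialization of the construction of $E^{f^-}$ to the degenerate case in which the source of $f^-$ is the initial object $\mathbb{0}$. The only point worth stating with care is that integration against $\mathbb{P}_{\mathbb{0}}$ over the single full-mass atom $\{*\}$ reduces to evaluation at $*$; once this is noted, the identity is immediate. One may also remark that the choice $A = \{*\}$ is forced, since the remaining element $\emptyset \in \Sigma_{\mathbb{0}}$ contributes only the trivial identity $0 = 0$.
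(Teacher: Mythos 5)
Your proposal is correct and follows exactly the same route as the paper's own proof: instantiate the defining identity (\ref{eq:RN_E}) at $A=\{*\}$, note that $!_Y^{-1}(\{*\})=Y$, and observe that integration of $E^{!_Y^-}(v)$ over the full-mass atom $\{*\}$ reduces to evaluation at $*$. Nothing is missing.
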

\begin{proof}
\begin{equation*}
E^{!_Y^-}(v)(*)
  =
\int_{\{*\}}
  E^{!_Y^-}(v)
\, d \mathbb{P}_{\mathbb{0}}
  =
\int_{
  !_Y^{-1}
  (\{*\})
}
  v
\, d \mathbb{P}_Y
  =
\int_Y
  v
\, d \mathbb{P}_Y
  =
\mathbb{E}^{\mathbb{P}_Y}[v].
\end{equation*}
\end{proof}

Proposition \ref{prop:uncond_exp}
asserts that 
our unconditional expectation is a 
natural extension of the
classical one.

\section{$f^-$-measurability}
\label{sec:meas}

\begin{defn}{[$f^-$-measurability]}
\label{defn:measu}
Let 
$
f^-
  :
\ProbX
  \to
\ProbY
$
be a
$\Prob$-arrow
and
$
v
  \in
\mathcal{L}^{\infty}(\ProbNY)
$.
$v$
is called
\newword{$f^-$-measurable}
if
there exists
$
w
  \in
\mathcal{L}^{\infty}(\ProbNX)
$
such that
$
v
  \sim_{\mathbb{P}_Y}
w \circ f
$.

\end{defn}
The following proposition allows us to say that
an element of
$
L\ProbY
$
is
$
f^-
$-measurable.

\begin{prop}
\label{prop:f_measu_invariance}
Let 
$
f^-
  :
\ProbX
  \to
\ProbY
$
be a
$\Prob$-arrow
and
$
v_1
$
and
$
v_2
$
be two elements of
$
\mathcal{L}^{\infty}(\ProbNY)
$
satisfying
$
v_1
   \sim_{\mathbb{P}_Y}
v_2
$.
Then,
if
$v_1$
is $f^-$-measurable,
so is 
$v_2$.
\end{prop}
\begin{proof}
Obvious.
\end{proof}


The next proposition is well-known.
For example, see
Page 206 of 
\cite{williams1991}.

\begin{prop}
\label{prop:f_meas_classic}
Let 
$
f^-
  :
\ProbX
  \to
\ProbY
$
be a
$\Prob$-arrow
and
$
v
  \in
\mathcal{L}^{\infty}(\ProbNY)
$.
Then,
$v$
is
$f^-$-measurable
if and only if
$v$
is
$
f^{-1}(\Sigma_X)
  /
\mathcal{B}(\mathbb{R})
$-measurable.

\end{prop}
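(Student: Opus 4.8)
The engine of the proof is the classical factorization (Doob--Dynkin) lemma quoted from \cite{williams1991}: since $f : Y \to X$ is measurable, a function $v : Y \to \mathbb{R}$ is $f^{-1}(\Sigma_X)/\mathcal{B}(\mathbb{R})$-measurable if and only if there is a $\Sigma_X/\mathcal{B}(\mathbb{R})$-measurable $w : X \to \mathbb{R}$ with $v = w \circ f$ as an exact pointwise equality. The plan is to prove the two implications by bridging the two gaps between this lemma and the definition of $f^-$-measurability, namely that the latter only asks for $v \sim_{\mathbb{P}_Y} w \circ f$ rather than exact equality, and that it insists on $w \in \mathcal{L}^{\infty}(\ProbX)$ rather than an arbitrary measurable $w$.

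First I would treat the implication from $f^{-1}(\Sigma_X)$-measurability to $f^-$-measurability. Assuming $v$ is $f^{-1}(\Sigma_X)$-measurable, the factorization lemma yields a $\Sigma_X$-measurable $w$ with $v = w \circ f$. The only defect is that $w$ need not lie in $\mathcal{L}^{\infty}(\ProbX)$, since essential boundedness of $v$ under $\mathbb{P}_Y$ controls $w$ only on $f(Y)$ up to $\mathbb{P}_Y \circ f^{-1}$-null sets, not under $\mathbb{P}_X$. I would remove this defect by truncation: setting $c := \mathbb{P}_Y\text{-}\esssup_{y \in Y}\abs{v(y)}$ and defining $w'(x) := w(x)$ when $\abs{w(x)} \le c$ and $w'(x) := 0$ otherwise, one gets a bounded, hence $\mathcal{L}^{\infty}(\ProbX)$, function. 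Because $\{\abs{w \circ f} > c\} = f^{-1}(\{\abs{w} > c\}) = \{\abs{v} > c\}$ is $\mathbb{P}_Y$-null, we still have $w' \circ f \sim_{\mathbb{P}_Y} w \circ f = v$, so $v$ is $f^-$-measurable. Note that this direction uses only measurability of $f$, not the null-preserving hypothesis.

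For the converse, suppose $v$ is $f^-$-measurable, so that $v \sim_{\mathbb{P}_Y} w \circ f$ for some $w \in \mathcal{L}^{\infty}(\ProbX)$. Here $w \circ f$ is automatically $f^{-1}(\Sigma_X)$-measurable: for Borel $B$ we have $(w \circ f)^{-1}(B) = f^{-1}(w^{-1}(B)) \in f^{-1}(\Sigma_X)$, because $w$ is $\Sigma_X$-measurable. Thus $v$ agrees $\mathbb{P}_Y$-a.s. with the $f^{-1}(\Sigma_X)$-measurable function $w \circ f$, which is the asserted conclusion once measurability is read at the level of $L^{\infty}$-classes.

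The point demanding care --- and the main obstacle --- is precisely this last reconciliation. Genuine pointwise $f^{-1}(\Sigma_X)$-measurability of $v$ can be destroyed by altering $v$ on a $\mathbb{P}_Y$-null set lying outside $f^{-1}(\Sigma_X)$, so the two notions cannot coincide on the nose; they coincide exactly modulo $\mathbb{P}_Y$-a.s. equality. This is the correct reading in the present context, since by Proposition \ref{prop:f_measu_invariance} being $f^-$-measurable is a property of the class $[v]_{\sim_{\mathbb{P}_Y}} \in L^{\infty}(\ProbY)$, and it is what makes the ``only if'' direction go through: the converse step above produces a canonical $f^{-1}(\Sigma_X)$-measurable representative $w \circ f$ of that class.
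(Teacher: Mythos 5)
Your proposal is correct and follows essentially the same route as the paper, which gives no written proof here but simply cites the factorization (Doob--Dynkin) lemma on page 206 of Williams; that lemma is exactly the engine of your argument. The two details you add --- the truncation producing a representative $w'$ in $\mathcal{L}^{\infty}(\ProbX)$, and the observation that the ``only if'' direction holds only modulo $\mathbb{P}_Y$-a.s.\ equality, i.e.\ at the level of classes in $L^{\infty}(\ProbY)$ as licensed by Proposition \ref{prop:f_measu_invariance} --- are precisely the gaps the citation leaves open, and you handle both correctly.
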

Proposition \ref{prop:f_meas_classic}
says that
$f^-$-measurability
is 
 an extension of the classical measurability.

\begin{thm}
\label{thm:measu}
Let 
$
f^-
  :
\ProbX
  \to
\ProbY
$
be a
$\Prob$-arrow,
$ u $
be an element of
$\mathcal{L}^{1}(\ProbNY)$
and
$ v $
be a random variable in
$
\mathcal{L}^{\infty}(\ProbNY)
$,
and assume that
$v$
is
$f^-$-measurable.
Then
we have
\begin{equation}
E^{f^-}(
  v \cdot u
)
  \sim_{\mathbb{P}_X}
w 
  \cdot
E^{f^-}(u) ,
\end{equation}
where
$
w
  \in
\mathcal{L}^{\infty}(\ProbNX)
$
is a random variable satisfying
$
v
  \sim_{\mathbb{P}_Y}
w \circ f
$.

\end{thm}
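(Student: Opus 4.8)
The plan is to verify that the random variable $w \cdot E^{f^-}(u)$ satisfies the defining integral equation (\ref{eq:RN_E}) of $E^{f^-}(v \cdot u)$, and then to conclude by the uniqueness (up to $\mathbb{P}_X$-a.s.\ equality) granted by the Radon-Nikodym theorem in the definition of $E^{f^-}$. Note first that $v \cdot u \in \mathcal{L}^1(\ProbNY)$ and $w \cdot E^{f^-}(u) \in \mathcal{L}^1(\ProbNX)$, because $v, w$ are essentially bounded while $u, E^{f^-}(u)$ are integrable, so every object appearing in the statement is well-defined. Thus it suffices to prove that for every $A \in \Sigma_X$,
\[
\int_A w \cdot E^{f^-}(u) \, d\mathbb{P}_X = \int_{f^{-1}(A)} v \cdot u \, d\mathbb{P}_Y .
\]
Since $v \sim_{\mathbb{P}_Y} w \circ f$, the two products agree $\mathbb{P}_Y$-a.s., i.e.\ $v \cdot u \sim_{\mathbb{P}_Y} (w \circ f) \cdot u$, so the right-hand side equals $\int_{f^{-1}(A)} (w \circ f) \cdot u \, d\mathbb{P}_Y$. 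This is the only step that uses the $f^-$-measurability hypothesis, and it reduces the whole theorem to an identity about $w \circ f$.

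The core of the argument is the case of an indicator $w = \mathbb{1}_B$ with $B \in \Sigma_X$. Here $w \circ f = \mathbb{1}_B \circ f = \mathbb{1}_{f^{-1}(B)}$, so using $f^{-1}(A) \cap f^{-1}(B) = f^{-1}(A \cap B)$ and the defining equation (\ref{eq:RN_E}) applied to $u$,
\[
\int_{f^{-1}(A)} (w \circ f) \cdot u \, d\mathbb{P}_Y = \int_{f^{-1}(A \cap B)} u \, d\mathbb{P}_Y = \int_{A \cap B} E^{f^-}(u) \, d\mathbb{P}_X = \int_A w \cdot E^{f^-}(u) \, d\mathbb{P}_X ,
\]
which is precisely the desired identity. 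By linearity of the integral, together with the linearity of $E^{f^-}$ from Proposition \ref{prop:lin}, the identity then extends at once to every simple function $w = \sum_i c_i \mathbb{1}_{B_i}$ with $B_i \in \Sigma_X$.

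Finally I would pass to a general $w \in \mathcal{L}^{\infty}(\ProbNX)$ by approximation. As $w$ is essentially bounded, choose simple functions $w_n$ with $\norm{w_n - w}_{\infty} \to 0$; then $w_n \circ f \to w \circ f$ in $\mathcal{L}^{\infty}(\ProbNY)$ as well. The estimate
\[
\int_{f^{-1}(A)} \abs{(w_n \circ f) - (w \circ f)} \cdot \abs{u} \, d\mathbb{P}_Y \le \norm{w_n - w}_{\infty} \int_Y \abs{u} \, d\mathbb{P}_Y
\]
and the analogous one with $E^{f^-}(u)$ in place of $u$ over $(X, \Sigma_X, \mathbb{P}_X)$ show that both sides of the identity established for each $w_n$ converge to the corresponding sides for $w$. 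Passing to the limit yields the identity for $w$, and the theorem follows. I expect the only point needing care to be this final limit: one must be sure the dominating functions $\abs{u}$ and $\abs{E^{f^-}(u)}$ are genuinely integrable, which they are since $u, E^{f^-}(u) \in \mathcal{L}^1$, so that the uniform approximation of $w$ transfers to $\mathcal{L}^1$-convergence of both integrands simultaneously.
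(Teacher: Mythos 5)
Your proof is correct, and it reaches the conclusion by a more self-contained route than the paper. The published proof reduces the theorem, exactly as you do, to the identity $\int_{f^{-1}(A)} v\cdot u \, d\mathbb{P}_Y = \int_A w\cdot E^{f^-}(u)\, d\mathbb{P}_X$, but then dispatches it in one line by invoking the transformation theorem: applying the image-measure change of variables $\int_{f^{-1}(A)}(w\circ f)\,d\mu = \int_A w\, d(\mu\circ f^{-1})$ to the signed measure $\mu(B):=\int_B u\,d\mathbb{P}_Y$, split into positive and negative parts via the Jordan decomposition, and identifying $d(\mu\circ f^{-1})/d\mathbb{P}_X$ with $E^{f^-}(u)$. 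What you do instead is re-prove precisely that instance of the transformation theorem by the standard three-step machine (indicators, simple functions, bounded limits); an earlier draft retained in the source file does the same, using pointwise dominated convergence where you use uniform approximation. Your version buys transparency and avoids citing an external theorem; the paper's buys brevity. Two points worth tightening in yours: (i) the passage from indicators to simple $w$ needs only linearity of the integral on both sides, so the appeal to Proposition \ref{prop:lin} is superfluous; and (ii) your assertion that $\norm{w_n - w}_{\infty}\to 0$ in $\mathcal{L}^{\infty}(\ProbNX)$ forces $w_n\circ f \to w\circ f$ in $\mathcal{L}^{\infty}(\ProbNY)$ silently uses that $f$ is null-preserving, since the $\mathbb{P}_X$-null set where $\abs{w_n-w}$ exceeds its essential bound must pull back to a $\mathbb{P}_Y$-null set --- the same mechanism as in Proposition \ref{prop:ar_congr} --- and deserves an explicit word.
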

\begin{proof}
By (\ref{eq:RN_E}),
it is sufficient to prove that
for every
$
A
  \in
\Sigma_X 
$,
\begin{equation}
\label{eq:f_meas_pr}
\int_{f^{-1}(A)}
   v \cdot u
\, d \mathbb{P}_Y
   =
\int_A
  w
\cdot
  E^{f^-}(u)
\, d \mathbb{P}_X.
\end{equation}

\if \obsolete 1
{\color{red}
Firstly, we prove 
(\ref{eq:f_meas_pr})
 in the case when
$
w
  =
1_F
$
with some
$
F
  \in
\Sigma_X
$.
In the following,
note that 
$
1_F \circ f
  =
1_{f^{-1}(F)}
$.
\begin{align*}
&
\int_{f^{-1}(A)}
   v \cdot u
\, d \mathbb{P}_Y
   =
\int_{f^{-1}(A)}
   1_{f^{-1}(F)}
   \cdot u
\, d \mathbb{P}_Y
   =
\int_{
  f^{-1}(A)
\cap
  f^{-1}(F)
}
   u
\, d \mathbb{P}_Y
\\&
   =
\int_{
  A \cap F
}
  E^{f^-}(u)
\, 
d \mathbb{P}_X
   =
\int_A
  1_F
\cdot
  E^{f^-}(u)
\, d \mathbb{P}_X
   =
\int_A
  w
\cdot
  E^{f^-}(u)
\, d \mathbb{P}_X.
\end{align*}

Secondly, we consider
the case when
$
w
  =
\sum_{i=1}^n
  a_i
  1_{F_i}
$
for
$
n \in \mathbb{N}
$,
$
a_i \in \mathbb{R}
$
and
$
F_i
  \in
\Sigma_X
$.
Note that
$
v =
\sum_{i=1}^n
  a_i
  1_{
    f^{-1}(F_i)
  }
$.
Then,
by the result of the first case,
\begin{align*}
\int_{f^{-1}(A)}
   v \cdot u
\, d \mathbb{P}_Y
   &=
\sum_{i=1}^n
  a_i
\int_{f^{-1}(A)}
   1_{
     f^{-1}(F_i)
   }
   \cdot u
\, d \mathbb{P}_Y
  \\&=
\sum_{i=1}^n
  a_i
\int_A
   1_{F_i}
     \cdot 
   E^{f^-}(u)
\, d \mathbb{P}_X
   =
\int_A
   w
     \cdot 
   E^{f^-}(u)
\, d \mathbb{P}_X.
\end{align*}

Finally,
in the case when
$
w
   \in
\mathcal{L}^{\infty}(\ProbNX)
$,
there exists a sequence of step functions
$
\{ w_n\}_{n \in \mathbb{N}}
$
such that
$
\abs{w_n}
  \le
\abs{w}
$
and
$
w_n
  \rightarrow
w
$.
Since
$
w_n \circ f
  \rightarrow
v
$,
we have 
\begin{align*}
\int_{f^{-1}(A)}
   v \cdot u
\, d \mathbb{P}_Y
   &=
\lim_{n \rightarrow \infty}
  \int_{f^{-1}(A)}
     (w_n \circ f)
        \cdot
     u
  \, d \mathbb{P}_Y
   \\&=
\lim_{n \rightarrow \infty}
  \int_A
     w_n
        \cdot
     E^{f^-}(u)
  \, d \mathbb{P}_X
   =
\int_A
   w
     \cdot 
   E^{f^-}(u)
\, d \mathbb{P}_X
\end{align*}
by the result of the second case,
which completes the proof.} 
\fi 

But, it is obvious from the transformation theorem applying with the Jordan decomposition.
\end{proof}

Theorem \ref{thm:measu}
is a generalization of a classical formula
\begin{equation*}
\mathbb{E}^{\mathbb{P}}[
  v \cdot u
\mid
  \mathcal{G}
]
  \sim_{\mathbb{P}}
v
  \cdot
\mathbb{E}^{\mathbb{P}}[
  u
\mid
  \mathcal{G}
]
\end{equation*}
for a
$\mathcal{G}$-measurable 
random variable
$v$.

The following theorem has
some categorical taste.

\begin{thm}
\label{thm:catMeas}
Let
$
 \mathcal{E} \boxtimes \mathbf{L},
 \mathcal{E} \mathcal{P}_1
:
 \Prob^{op} \times \Prob
\to
 \Set
$
be two parallel bifunctors defined by
\begin{equation*}
\mathcal{E} \boxtimes \mathbf{L}
  :=
\Box
  \circ
(\mathcal{E} \times \mathbf{L})
\; \; \textrm{and} \; \;
\mathcal{E} \mathcal{P}_1
  :=
\mathcal{E}
  \circ
\mathcal{P}_1
\end{equation*}
where
$
\mathcal{P}_1
  :
\Prob^{op} \times \Prob
  \to
\Prob^{op}
$
is the projection for the first component,
and
$
\Box
  :
\Set \times \Set
  \to
\Set
$
is a functor 
which sending an ordered pair of sets to the set product of its components. 

Now, for each
$\Prob$-object
$\ProbX$,
define a function
$
\alpha_{\ProbX}
  :
L^{1}(\ProbNX) \times L^{\infty}(\ProbNX)
  \to
L^{1}(\ProbNX)
$
by
$
\alpha_{\ProbX}(
  \pair{
     [
        u
     ]_{\sim_{\mathbb{P}_X}}
   }{
     [
        v
     ]_{\sim_{\mathbb{P}_X}}
   }
)
  =
[
   u \cdot v
]_{\sim_{\mathbb{P}_X}}
$.
Then the following diagram commutes.
\begin{equation*}
\xymatrix@C=12 pt@R=10 pt{
    \Prob^{op} \times \Prob
      \ar @{->}^-{
        \mathcal{E} \boxtimes \mathbf{L}
      } [r]
&
    \Set
&&
    \Set
&
    \Prob^{op} \times \Prob
      \ar @{->}_-{
        \mathcal{E} \mathcal{P}_1
      } [l]
\\
    \pair{\ProbX}{\ProbX}
&
    L^{1}(\ProbNX) \times L^{\infty}(\ProbNX)
      \ar @{->}^-{\alpha_{\ProbX}} [rr]
&&
    L^{1}(\ProbNX)
      \ar @{->}^{
        \mathcal{E} Id_X^-
      } [dd]
&
    \pair{\ProbX}{\ProbX}
      \ar @{->}^{
        \pair{Id_X^-}{f^-}
      } [dd]
\\\\
    \pair{\ProbY}{\ProbX}
      \ar @{->}^{
        \pair{f^-}{Id_X^-}
      } [uu]
      \ar @{->}_{
        \pair{Id_Y^-}{f^-}
      } [dd]
&
    L^{1}(\ProbNY) \times L^{\infty}(\ProbNX)
      \ar @{->}^{
        \mathcal{E} f^- \times \mathbf{L} Id_{X}^-
      } [uu]
      \ar @{->}_{
        \mathcal{E} Id_Y^- \times \mathbf{L} f^-
      } [dd]
&&
    L^{1}(\ProbNX)
&
    \pair{\ProbX}{\ProbY}
\\\\
    \pair{\ProbY}{\ProbY}
&
    L^{1}(\ProbNY) \times L^{\infty}(\ProbNY)
      \ar @{->}^-{\alpha_{\ProbY}} [rr]
&&
    L^{1}(\ProbNY)
      \ar @{->}_{
        \mathcal{E} f^-
      } [uu]
&
    \pair{\ProbY}{\ProbY}
      \ar @{->}_{
        \pair{f^-}{Id_Y^-}
      } [uu]
}
\end{equation*}
In other words,
$
  \alpha :
    \mathcal{E} \boxtimes \mathbf{L}
      \xrightarrow{\centerdot \centerdot}
    \mathcal{E} \mathcal{P}_1
$
is a dinatural transformation.

\end{thm}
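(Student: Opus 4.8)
The plan is to unwind the definition of dinaturality and verify that the single hexagon attached to an arrow collapses onto a statement already proved. Recall that for two functors $F,G:\Prob^{op}\times\Prob\to\Set$, a family of maps $\alpha_{\ProbX}:F(\ProbX,\ProbX)\to G(\ProbX,\ProbX)$ is dinatural precisely when, for every $\Prob$-arrow $f^-:\ProbX\to\ProbY$, the hexagon whose ``up-then-across'' leg and ``down-then-across'' leg share source $F(\ProbY,\ProbX)$ and target $G(\ProbX,\ProbY)$ commutes. With $F=\mathcal{E}\boxtimes\mathbf{L}$ and $G=\mathcal{E}\mathcal{P}_1$ this is exactly the outer boundary of the displayed diagram, with source $L^{1}(\ProbNY)\times L^{\infty}(\ProbNX)$ and target $L^{1}(\ProbNX)$. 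So it suffices to fix representatives $u\in\mathcal{L}^{1}(\ProbNY)$ and $v\in\mathcal{L}^{\infty}(\ProbNX)$ and chase the class $\pair{[u]}{[v]}$ around both legs.

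First I would compute the upper leg. The map $\mathcal{E}f^-\times\mathbf{L}Id_X^-$ sends $\pair{[u]}{[v]}$ to $\pair{[E^{f^-}(u)]}{[v]}$, because $\mathbf{L}Id_X^-$ acts as $[v]\mapsto[v\circ id_X]=[v]$; then $\alpha_{\ProbX}$ multiplies to give $[E^{f^-}(u)\cdot v]$, which $\mathcal{E}Id_X^-$ fixes by Proposition \ref{prop:E_id}. Thus the upper leg delivers $[E^{f^-}(u)\cdot v]$. For the lower leg, $\mathcal{E}Id_Y^-\times\mathbf{L}f^-$ sends $\pair{[u]}{[v]}$ to $\pair{[u]}{[v\circ f]}$, the class $[v\circ f]$ being well defined by Proposition \ref{prop:ar_congr}; then $\alpha_{\ProbY}$ gives $[u\cdot(v\circ f)]$, and finally $\mathcal{E}f^-$ produces $[E^{f^-}(u\cdot(v\circ f))]$.

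Hence the hexagon commutes if and only if
\[
E^{f^-}\bigl((v\circ f)\cdot u\bigr)\sim_{\mathbb{P}_X}v\cdot E^{f^-}(u).
\]
I would finish by noting that $v\circ f$ is an $f^-$-measurable element of $\mathcal{L}^{\infty}(\ProbNY)$ in the sense of Definition \ref{defn:measu}, its witness being $w:=v$ itself (trivially $v\circ f\sim_{\mathbb{P}_Y}v\circ f$). Therefore Theorem \ref{thm:measu}, applied with the $f^-$-measurable factor $v\circ f$, the $\mathcal{L}^{1}$-factor $u$, and witness $w=v$, yields exactly the displayed identity, so the hexagon commutes and $\alpha$ is dinatural.

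The only real work is the bookkeeping of the two legs: keeping straight which component each bifunctor acts on, and that the two identity arrows $Id_X^-$, $Id_Y^-$ contribute nothing up to $\mathbb{P}_X$- or $\mathbb{P}_Y$-a.s.\ equality. Once that is done, the content of dinaturality is exactly Theorem \ref{thm:measu}, so I expect no genuine obstacle beyond this diagram chase. The well-definedness of $\alpha_{\ProbX}$ on classes, namely that the product of an $L^{1}$-class and an $L^{\infty}$-class is a well-defined $L^{1}$-class independent of representatives, is routine and can be dispatched in a line.
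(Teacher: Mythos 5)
Your proposal is correct and takes essentially the same route as the paper's own proof: fix a representative pair, chase it around the two legs of the dinaturality hexagon to get $[E^{f^-}(u)\cdot v]$ and $[E^{f^-}(u\cdot(v\circ f))]$, and identify them via Theorem \ref{thm:measu}. The paper is merely terser, citing Theorem \ref{thm:measu} without spelling out that the $f^-$-measurability witness is $v$ itself.
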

\begin{proof}
For
$
\pair{
  [v]_{\sim_{\mathbb{P}_Y}}
}{
  [u]_{\sim_{\mathbb{P}_X}}
}
  \in
L^{1}(\ProbNY) \times L^{\infty}(\ProbNX)
$,
we have
\begin{align*}
(
 \mathcal{E} Id_X^-
   \circ
 \alpha_{\ProbX}
   \circ
 (\mathcal{E} f^- \times \mathbf{L} Id_{X}^-)
)(
\pair{
  [v]_{\sim_{\mathbb{P}_Y}}
}{
  [u]_{\sim_{\mathbb{P}_X}}
}
)
  &=
\big[
  E^{f^-}(v) \cdot u
\big]_{\sim_{\mathbb{P}_X}},
\\
(
 \mathcal{E} f^-
   \circ
 \alpha_{\ProbY}
   \circ
 (\mathcal{E} Id_Y^- \times \mathbf{L} f^-)
)(
\pair{
  [v]_{\sim_{\mathbb{P}_Y}}
}{
  [u]_{\sim_{\mathbb{P}_X}}
}
)
  &=
\big[
  E^{f^-}(v \cdot (u \circ f))
\big]_{\sim_{\mathbb{P}_X}}
\end{align*}
since 
$
\mathcal{E} Id_X^-
  =
Id_{L^1(\ProbNX)}
$.
But by Theorem \ref{thm:measu},
two rightmost hand sides coincide,
which completes the proof.
\end{proof}

\section{$f^-$-independence}
\label{sec:indep}

\begin{defn}{[Category $\mpProb$]}
\label{defn:mp}
A 
$\Prob$-arrow
$
f^-
  :
\ProbX
  \to
\ProbY
$
is called
\newword{measure-preserving}
if
$
\mathbb{P}_Y
  \circ
f^{-1}
  =
\mathbb{P}_X
$.
A subcategory
$\mpProb$
of 
$\Prob$
is a category
whose objects are same as those of $\Prob$
but arrows are limited to all measure-preserving arrows.
\end{defn}

Franz defines stochastic independence
in the opposite category of $\mpProb$
as an example of his introducing notion of
stochastic independence in monoidal categories.

\begin{defn}{\cite{franz_2003}}
\label{defn:mpIndep}
Two 
$\mpProb$-arrows
$
f^- :
  \ProbX
\to
  \ProbZ
$
and
$
g^- :
  \ProbY
\to
  \ProbZ
$
are called
\newword{independent}
if there exists an
$\mpProb$-arrow
$
q^- :
  \ProbX \otimes \ProbY
\to
  \ProbZ
$
such that
 the following diagram commutes
\begin{equation*}
\xymatrix@C=20 pt@R=20 pt{
&
  \ProbZ
\\
  \ProbX
    \ar @{->}_-{p_1^-} [r]
    \ar @{->}^-{f^-} [ru]
&
  \ProbX \otimes \ProbY
    \ar @{.>}^-{q^-} [u]
&
  \ProbY
    \ar @{->}^-{p_2^-} [l]
    \ar @{->}_-{g^-} [lu]
}
\end{equation*}
where
$
\ProbX \otimes \ProbY
  :=
(
  X \times Y,
  \Sigma_X \otimes \Sigma_Y,
  \mathbb{P}_X \otimes \mathbb{P}_Y
)
$,
$p_1$
and
$p_2$
are projections,
$
  \Sigma_X \otimes \Sigma_Y
$
is the smallest
$\sigma$-algebra
of 
$
  X \times Y
$
making
both
$p_1$
and
$p_2$
measurable,
and
$
  \mathbb{P}_X \otimes \mathbb{P}_Y
$
is a product measure such that
$
(\mathbb{P}_X \otimes \mathbb{P}_Y)(A \times B)
  =
\mathbb{P}_X(A)
\mathbb{P}_Y(B)
$
for all
$
A \in \Sigma_X
$
and
$
B \in \Sigma_Y
$.
\end{defn}

Franz shows that
the notion of independence
defined in
Definition \ref{defn:mpIndep}
exactly matches the classical one in the sense of the following proposition.

\begin{prop}{\cite{franz_2003}}
\label{prop:mpIndep}
Two 
$\mpProb$-arrows
$
f^- :
  \ProbX
\to
  \ProbZ
$
and
$
g^- :
  \ProbY
\to
  \ProbZ
$
are independent
if and only if
for every pair of
$A \in \Sigma_X$
and
$B \in \Sigma_Y$
\begin{equation}
\label{eq:def_indep}
\mathbb{P}_Z(
  f^{-1}(A)
\cap
  g^{-1}(B)
)
 =
\mathbb{P}_Z(
  f^{-1}(A)
)
\mathbb{P}_Z(
  g^{-1}(B)
).
\end{equation}
\end{prop}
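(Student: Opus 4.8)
The plan is to translate the categorical independence condition of Definition \ref{defn:mpIndep} into a statement about the single mediating map $q$, and then to recognize that the measure-preserving requirement on $q^-$ is exactly the rectangle factorization (\ref{eq:def_indep}) propagated to the whole product $\sigma$-algebra.

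First I would unwind the commutativity of the diagram. Because an arrow in $\Prob$ reverses the direction of its underlying function, the triangle $f^- = q^- \circ p_1^-$ amounts to $p_1 \circ q = f$ and $g^- = q^- \circ p_2^-$ amounts to $p_2 \circ q = g$, where $q : \ProbZ \to X \times Y$ is the function underlying $q^-$ and $p_1, p_2$ are the coordinate projections. Together these force $q(z) = (f(z), g(z))$ for every $z$, so the mediating arrow, if it exists, is uniquely determined. Conversely, given $f$ and $g$ I can simply define $q(z) := (f(z), g(z))$; this is measurable because $\Sigma_X \otimes \Sigma_Y$ is generated by the two projections and both $p_1 \circ q = f$ and $p_2 \circ q = g$ are measurable, and it makes the diagram commute by construction. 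Thus on either side of the equivalence the only live question is whether this particular $q$ is measure-preserving, i.e. whether $\mathbb{P}_Z \circ q^{-1} = \mathbb{P}_X \otimes \mathbb{P}_Y$.

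Next I would evaluate both of these measures on a rectangle $A \times B$ with $A \in \Sigma_X$ and $B \in \Sigma_Y$. On one side, $q^{-1}(A \times B) = f^{-1}(A) \cap g^{-1}(B)$, so $(\mathbb{P}_Z \circ q^{-1})(A \times B) = \mathbb{P}_Z(f^{-1}(A) \cap g^{-1}(B))$. On the other side, using that $f^-$ and $g^-$ are themselves measure-preserving, $\mathbb{P}_X(A) = \mathbb{P}_Z(f^{-1}(A))$ and $\mathbb{P}_Y(B) = \mathbb{P}_Z(g^{-1}(B))$, whence $(\mathbb{P}_X \otimes \mathbb{P}_Y)(A \times B) = \mathbb{P}_Z(f^{-1}(A))\,\mathbb{P}_Z(g^{-1}(B))$. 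Comparing these shows that the two measures agree on every rectangle precisely when (\ref{eq:def_indep}) holds.

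Finally I would promote agreement on rectangles to agreement on all of $\Sigma_X \otimes \Sigma_Y$. The rectangles form a $\pi$-system generating the product $\sigma$-algebra, and both $\mathbb{P}_Z \circ q^{-1}$ and $\mathbb{P}_X \otimes \mathbb{P}_Y$ are probability measures, so by the standard $\pi$-$\lambda$ uniqueness theorem they coincide globally as soon as they coincide on rectangles. This settles both implications: if a measure-preserving $q^-$ exists then restricting $\mathbb{P}_Z \circ q^{-1} = \mathbb{P}_X \otimes \mathbb{P}_Y$ to rectangles yields (\ref{eq:def_indep}); and if (\ref{eq:def_indep}) holds then the $q$ built above satisfies $\mathbb{P}_Z \circ q^{-1} = \mathbb{P}_X \otimes \mathbb{P}_Y$, hence is measure-preserving and a fortiori null-preserving (so a genuine $\mpProb$-arrow) filling the diagram. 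The step needing care is exactly this extension: the rectangle identity by itself is not the definition of measure-preservation, and one must invoke uniqueness of measures agreeing on a generating $\pi$-system to conclude.
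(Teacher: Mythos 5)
Your proof is correct. The paper offers no proof of this proposition at all --- it is quoted from Franz \cite{franz_2003} --- so there is nothing internal to compare against; your argument (commutativity forces the mediating map to be $q(z)=(f(z),g(z))$, measure-preservation of $q^-$ restricted to rectangles is exactly (\ref{eq:def_indep}) once one uses that $f^-$ and $g^-$ are measure-preserving, and the $\pi$--$\lambda$ theorem upgrades agreement on the generating $\pi$-system of rectangles to equality of $\mathbb{P}_Z\circ q^{-1}$ and $\mathbb{P}_X\otimes\mathbb{P}_Y$ on all of $\Sigma_X\otimes\Sigma_Y$) is the standard one and is complete.
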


Before extending the notion of independence to the category
$\Prob$,
we need the following note.

\begin{prop}
\label{prop:splitProbA}
Let
$
f^- :
  \ProbX
\to
  \ProbY
$
be a $\Prob$-arrow.
We define a
 $\Prob$-object
$\ProbX_{f^-}$
by
\begin{equation}
\ProbX_{f^-}
  :=
(
  X, \Sigma_X,
  \mathbb{P}_Y \circ f^{-1}
).
\end{equation}
Then, the following diagram commutes in $\Prob$
\begin{equation*}
\xymatrix@C=20 pt@R=20 pt{
  \ProbX
    \ar @{->}^-{f^-} [r]
    \ar @{->}_-{id_X^{-}} [d]
&
  \ProbY
\\
  \ProbX_{f^-}
    \ar @{->}_-{f^{\sim}} [ru]
}
\end{equation*}
where
$
f^{\sim}
$
and
$
id_X^{-}
$
are corresponding 
$\Prob$-arrows
of
$
f :
  \ProbY
\to
  \ProbX_{f^-}
$
and
$
id_X :
  \ProbX_{f^-}
\to
  \ProbX
$,
respectively.
Moreover,
$
f^{\sim}
$
is measure-preserving.

\end{prop}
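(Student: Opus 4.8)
The plan is to verify the four things the statement implicitly requires: that $\ProbX_{f^-}$ is a genuine $\Prob$-object, that $id_X^-$ and $f^{\sim}$ are genuine $\Prob$-arrows, that the triangle commutes, and that $f^{\sim}$ is measure-preserving. Throughout, the one point to keep straight is the direction reversal built into Definition \ref{defn:cat_prob}: the $\Prob$-arrow $f^-$ is carried by the measurable function $f : \ProbY \to \ProbX$, so $f^{-1}$ pushes $\mathbb{P}_Y$ forward to a measure on $(X, \Sigma_X)$, and every absolute-continuity check will be phrased through Proposition \ref{prop:null_pres}.

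First I would confirm that $\ProbX_{f^-} = (X, \Sigma_X, \mathbb{P}_Y \circ f^{-1})$ really is a probability space: $\mathbb{P}_Y \circ f^{-1}$ is countably additive because $f^{-1}$ commutes with disjoint countable unions, and $(\mathbb{P}_Y \circ f^{-1})(X) = \mathbb{P}_Y(f^{-1}(X)) = \mathbb{P}_Y(Y) = 1$. Next I would check the two new arrows via Proposition \ref{prop:null_pres}. For $id_X : \ProbX_{f^-} \to \ProbX$, null-preservation reduces to $(\mathbb{P}_Y \circ f^{-1}) \circ id_X^{-1} = \mathbb{P}_Y \circ f^{-1} \ll \mathbb{P}_X$, which is precisely the null-preservation already granted to $f$; hence $id_X^-$ is a legitimate $\Prob$-arrow. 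For $f : \ProbY \to \ProbX_{f^-}$, the same proposition reduces null-preservation to $\mathbb{P}_Y \circ f^{-1} \ll \mathbb{P}_Y \circ f^{-1}$, which holds trivially by reflexivity; hence $f^{\sim}$ is also legitimate.

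It then remains to verify commutativity and the measure-preserving claim, and both fall out by unwinding definitions. Since $id_X^-$ is carried by $id_X$ and $f^{\sim}$ by $f$, the composite $f^{\sim} \circ id_X^-$ in $\Prob$ is carried by the set-map $id_X \circ f = f : \ProbY \to \ProbX$, which is exactly the map carrying $f^-$; as the correspondence $(-)^-$ is a bijection on each hom-set, I conclude $f^{\sim} \circ id_X^- = f^-$, so the triangle commutes. Finally, $f^{\sim}$ is measure-preserving in the sense of Definition \ref{defn:mp} precisely when $\mathbb{P}_Y \circ f^{-1}$ equals the measure carried by its domain $\ProbX_{f^-}$, and that measure was defined to be $\mathbb{P}_Y \circ f^{-1}$; the required identity therefore holds by construction.

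I do not expect a genuine obstacle here: the substance is concentrated in choosing the pushforward $\mathbb{P}_Y \circ f^{-1}$ as the domain measure, so that the domain absorbs the defect of $f$ against $\mathbb{P}_X$ and the right-hand leg becomes tautologically measure-preserving. The only place demanding care is the bookkeeping of the contravariant $(-)^-$ convention when composing, to be sure that $f^{\sim} \circ id_X^-$ is indeed carried by $f$ and not by some other composite.
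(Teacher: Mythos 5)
Your verification is correct and is exactly the routine unwinding of definitions that the paper has in mind --- its own proof of this proposition is simply ``Obvious.'' You supply all the checks the paper omits (well-definedness of the pushforward space, null-preservation of both legs via Proposition \ref{prop:null_pres}, the identity $id_X \circ f = f$ for commutativity, and the tautological measure-preservation of $f^{\sim}$), with the direction conventions handled correctly throughout.
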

\begin{proof}
Obvious.
\end{proof}

\begin{defn}{[Independence in $\Prob$]}
\label{defn:indepProb}
Two 
$\Prob$-arrows
$
f^- :
  \ProbX
\to
  \ProbZ
$
and
$
g^- :
  \ProbY
\to
  \ProbZ
$
are called
\newword{independent}
if there exists a
measure-preserving arrow
$
q^- :
  \ProbX_{f^-} \otimes \ProbY_{g^-}
\to
  \ProbZ
$
such that
 the following diagram commutes.
\begin{equation*}
\xymatrix@C=20 pt@R=20 pt{
  \ProbX
    \ar @{->}^-{f^-} [r]
    \ar @{->}_-{id_X^{-}} [d]
&
  \ProbZ
&
  \ProbY
    \ar @{->}_-{g^-} [l]
    \ar @{->}^-{id_Y^{-}} [d]
\\
  \ProbX_{f^-}
    \ar @{->}_-{p_1^-} [r]
    \ar @{->}^-{f^{\sim}} [ru]
&
  \ProbX_{f^-} \otimes \ProbY_{g^-}
    \ar @{.>}^-{q^-} [u]
&
  \ProbY_{g^-}
    \ar @{->}^-{p_2^-} [l]
    \ar @{->}_-{g^{\sim}} [lu]
}
\end{equation*}

\end{defn}

\begin{lem}
\label{lem:ELid}
For a measure-preserving
$\Prob$-arrow
$
f^- :
  \ProbX
\to
  \ProbY
$,
$
\mathcal{E} f^-
  \circ
\mathbf{L} f^-
  =
id_{\mathbf{L}\ProbX}
$.
\end{lem}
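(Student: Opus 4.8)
The plan is to unwind both functors on the arrow $f^-$ and reduce the statement to a single almost-sure identity, which I then dispatch using the defining equation of the conditional expectation together with the transformation theorem. Chasing the diagrams in Definition \ref{defn:fun_L} and Definition \ref{defn:fun_E}, the map $\mathbf{L} f^- : L^\infty(\ProbNX) \to L^\infty(\ProbNY)$ sends $[u]_{\sim_{\mathbb{P}_X}}$ to $[u \circ f]_{\sim_{\mathbb{P}_Y}}$, while $\mathcal{E} f^- : L^1(\ProbNY) \to L^1(\ProbNX)$ sends $[v]_{\sim_{\mathbb{P}_Y}}$ to $[E^{f^-}(v)]_{\sim_{\mathbb{P}_X}}$. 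Because we work on a probability space, $L^\infty(\ProbNY) \subseteq L^1(\ProbNY)$, so the composite is well-defined (here $u \circ f \in \mathcal{L}^\infty(\ProbNY)$, as $\{\abs{u \circ f} > M\} = f^{-1}\{\abs{u} > M\}$ is $\mathbb{P}_Y$-null) and sends $[u]_{\sim_{\mathbb{P}_X}}$ to $[E^{f^-}(u \circ f)]_{\sim_{\mathbb{P}_X}}$. Hence it suffices to prove $E^{f^-}(u \circ f) \sim_{\mathbb{P}_X} u$ for every $u \in \mathcal{L}^\infty(\ProbNX)$.

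To establish this identity I would start from the characterizing relation \eqref{eq:RN_E} applied to $v := u \circ f$: for every $A \in \Sigma_X$,
\[
\int_A E^{f^-}(u \circ f) \, d\mathbb{P}_X = \int_{f^{-1}(A)} (u \circ f) \, d\mathbb{P}_Y .
\]
The right-hand side is exactly where the measure-preserving hypothesis enters. Applying the transformation (change-of-variables) theorem to the measurable map $f : \ProbY \to \ProbX$ rewrites the right-hand integral as $\int_A u \, d(\mathbb{P}_Y \circ f^{-1})$, and since $f^-$ is measure-preserving we have $\mathbb{P}_Y \circ f^{-1} = \mathbb{P}_X$, so it equals $\int_A u \, d\mathbb{P}_X$. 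Therefore
\[
\int_A E^{f^-}(u \circ f) \, d\mathbb{P}_X = \int_A u \, d\mathbb{P}_X \quad \text{for all } A \in \Sigma_X .
\]

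Finally I would invoke the uniqueness (up to $\mathbb{P}_X$-a.s. equality) built into the definition of $E^{f^-}$ via the Radon-Nikodym theorem: two integrable functions having the same integral over every set in $\Sigma_X$ are $\mathbb{P}_X$-almost surely equal. Applying this to $E^{f^-}(u \circ f)$ and $u$ gives $E^{f^-}(u \circ f) \sim_{\mathbb{P}_X} u$, and hence $\mathcal{E} f^- \circ \mathbf{L} f^- = id_{\mathbf{L}\ProbX}$, the resulting class indeed lying in $L^\infty(\ProbNX) = \mathbf{L}\ProbX$ because it coincides with $[u]_{\sim_{\mathbb{P}_X}}$.

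The computation is routine; the one place to be careful is the precise role of the hypothesis. Null-preservation alone only yields $\mathbb{P}_Y \circ f^{-1} \ll \mathbb{P}_X$ (Proposition \ref{prop:null_pres}), which would distort the pushed-forward integral by a Radon-Nikodym density and would not in general return $u$. It is the \emph{equality} $\mathbb{P}_Y \circ f^{-1} = \mathbb{P}_X$ of Definition \ref{defn:mp} that makes the pushed-forward integral reproduce $\int_A u \, d\mathbb{P}_X$ exactly. So I expect the main conceptual point to be isolating and using that equality rather than mere absolute continuity, even though there is no genuine technical obstacle.
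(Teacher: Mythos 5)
Your proof is correct, but it takes a different route from the paper's. The paper first observes that $u \circ f$ is $f^-$-measurable by definition (with witness $w = u$) and invokes Theorem \ref{thm:measu} to pull the measurable factor out, reducing the claim to $E^{f^-}(u \circ f) \sim_{\mathbb{P}_X} u \cdot E^{f^-}(1_Y)$; it then uses the measure-preserving hypothesis only to verify $E^{f^-}(1_Y) \sim_{\mathbb{P}_X} 1_X$ by integrating over an arbitrary $A \in \Sigma_X$. You instead bypass Theorem \ref{thm:measu} entirely and apply the transformation theorem directly to the right-hand side of \eqref{eq:RN_E}, turning $\int_{f^{-1}(A)}(u\circ f)\,d\mathbb{P}_Y$ into $\int_A u\,d(\mathbb{P}_Y\circ f^{-1}) = \int_A u\,d\mathbb{P}_X$ and concluding by the almost-sure uniqueness of the Radon--Nikodym derivative. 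The two arguments ultimately rest on the same tool, since the paper's own proof of Theorem \ref{thm:measu} is an appeal to the transformation theorem; yours is the more self-contained and elementary, while the paper's isolates the reusable identity $E^{f^-}(1_Y) \sim_{\mathbb{P}_X} 1_X$ for measure-preserving arrows (the same quantity reappears in Theorem \ref{thm:indep}) and illustrates the categorical machinery it has just built. Your side remarks --- that $u\circ f \in \mathcal{L}^\infty(\ProbNY) \subset \mathcal{L}^1(\ProbNY)$ so the composite is defined, and that mere null-preservation would only give $\mathbb{P}_Y\circ f^{-1} \ll \mathbb{P}_X$ and hence a density distorting the answer --- are accurate and correctly locate where measure-preservation is indispensable.
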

\begin{proof}
For
$
u \in
\mathcal{L}^{\infty}(\ProbNX)
$,
$
(\mathbf{L} f^-)[
  u
]_{\sim_{\mathbb{P}_X}}
  =
[
  u \circ f
]_{\sim_{\mathbb{P}_Y}}
$
is 
$f^-$-measurable.
Hence by
Theorem \ref{thm:measu},
\begin{equation*}
\mathcal{E} f^-(
  \mathbf{L} f^-(
    [
      u
    ]_{\sim_{\mathbb{P}_X}}
  )
)
  =
\big[
  E^{f^-}(
    u \circ f
  )
\big]_{\sim_{\mathbb{P}_X}}
  =
\big[
  u \cdot
  E^{f^-}(
    1_Y
  )
\big]_{\sim_{\mathbb{P}_X}}.
\end{equation*}
But, since $f^-$ is measure-preserving,
for all
$A \in \Sigma_X$
\begin{equation*}
\int_A 
  E^{f^-}(1_Y)
\, d \mathbb{P}_X
  =
\int_{f^{-1}(A)} 
  1_Y
\, d \mathbb{P}_Y
  =
\mathbb{P}_Y(f^{-1}(A))
  =
\mathbb{P}_X(A)
  =
\int_{A} 
  1_X
\, d \mathbb{P}_X.
\end{equation*}
Therefore,
$
  E^{f^-}(1_Y)
=
  1_X
$,
which concludes the proof.
\end{proof}

\begin{lem}
\label{lem:ELpi}
Let
$\ProbX$
and
$\ProbY$
be probability spaces.
Then
for all
$
v \in
\mathcal{L}^{\infty}(\ProbNY)
$,
\begin{equation}
E^{p_1^-}(v \circ p_2)
  \sim_{\mathbb{P}_X}
\mathbb{E}^{\mathbb{P}_{Y}}[v]
1_X
\end{equation}
where
$p_1$
and
$p_2$
are projections from
$X \times Y$
to
$X$ and $Y$, respectively.
\end{lem}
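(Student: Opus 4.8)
The plan is to reduce the statement to the defining property (\ref{eq:RN_E}) of the conditional expectation along $p_1^-$ and then evaluate the resulting integral over the product space by Fubini's theorem. First observe that $p_1^-$ is the $\Prob$-arrow $\ProbX \to \ProbX \otimes \ProbY$ associated with the measurable projection $p_1 : X \times Y \to X$, so that $E^{p_1^-}$ carries $\mathcal{L}^1(\ProbX \otimes \ProbY)$ into $\mathcal{L}^1(\ProbNX)$. Since $v \in \mathcal{L}^{\infty}(\ProbNY)$, the composite $v \circ p_2$ is bounded, hence integrable, so $v \circ p_2 \in \mathcal{L}^{\infty}(\ProbX \otimes \ProbY) \subset \mathcal{L}^1(\ProbX \otimes \ProbY)$ and the left-hand side is well defined.

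Applying (\ref{eq:RN_E}) with $f^- = p_1^-$, for every $A \in \Sigma_X$ we have
\begin{equation*}
\int_A E^{p_1^-}(v \circ p_2) \, d\mathbb{P}_X = \int_{p_1^{-1}(A)} (v \circ p_2) \, d(\mathbb{P}_X \otimes \mathbb{P}_Y).
\end{equation*}
The essential computation is then purely measure-theoretic: since $p_1^{-1}(A) = A \times Y$ and $(v \circ p_2)(x, y) = v(y)$ depends only on the second coordinate, Fubini's theorem on the product probability space gives
\begin{equation*}
\int_{A \times Y} v(y) \, d(\mathbb{P}_X \otimes \mathbb{P}_Y) = \int_A \left( \int_Y v \, d\mathbb{P}_Y \right) d\mathbb{P}_X = \int_A \mathbb{E}^{\mathbb{P}_Y}[v] \cdot 1_X \, d\mathbb{P}_X,
\end{equation*}
where the inner integral $\int_Y v \, d\mathbb{P}_Y = \mathbb{E}^{\mathbb{P}_Y}[v]$ is a constant in $x$.

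Combining the two displays yields $\int_A E^{p_1^-}(v \circ p_2) \, d\mathbb{P}_X = \int_A \mathbb{E}^{\mathbb{P}_Y}[v] \cdot 1_X \, d\mathbb{P}_X$ for every $A \in \Sigma_X$. Since both $E^{p_1^-}(v \circ p_2)$ and the constant $\mathbb{E}^{\mathbb{P}_Y}[v] \cdot 1_X$ lie in $\mathcal{L}^1(\ProbNX)$ and have equal integral over every $A \in \Sigma_X$, the uniqueness up to $\mathbb{P}_X$-null sets of the Radon-Nikodym derivative, the same uniqueness invoked in the definition of $E^{f^-}$, forces $E^{p_1^-}(v \circ p_2) \sim_{\mathbb{P}_X} \mathbb{E}^{\mathbb{P}_Y}[v] \cdot 1_X$, which is the claim. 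No step presents a genuine obstacle; the only points requiring care are the bookkeeping of the arrow direction, so that $p_1^{-1}(A) = A \times Y$ is indeed the correct cylinder set, and the verification that $v \circ p_2$ is integrable so that Fubini applies, both of which are immediate here because $\mathbb{P}_X \otimes \mathbb{P}_Y$ is a probability measure and $v$ is bounded.
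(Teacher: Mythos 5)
Your proof is correct and follows essentially the same route as the paper's: apply the defining identity (\ref{eq:RN_E}) for $E^{p_1^-}$, rewrite the integral over $p_1^{-1}(A) = A \times Y$, and factor it by Fubini's theorem into $\mathbb{E}^{\mathbb{P}_Y}[v]\int_A 1_X \, d\mathbb{P}_X$, concluding by the a.s.\ uniqueness of the Radon--Nikodym derivative. The only cosmetic difference is that the paper carries out the Fubini step by inserting the indicator $1_A \circ p_1$ and integrating over all of $X \times Y$, whereas you integrate directly over the cylinder set $A \times Y$.
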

\begin{proof}
For
$
A \in \Sigma_X
$,
\begin{align*}
\int_A
  E^{p_1^-}(v \circ p_2)
\, d \mathbb{P}_X
  &=
\int_{p_1^{-1}(A)}
  v \circ p_2
\, d (\mathbb{P}_X \otimes \mathbb{P}_Y)
  \\&=
\int_{X \times Y}
  (v \circ p_2)
\cdot
  (1_A \circ p_1)
\, d (\mathbb{P}_X \otimes \mathbb{P}_Y)
  \\&=
\int_{Y}
\int_{X}
  ((v \circ p_2) \pair{x}{y})
\cdot
  ((1_A \circ p_1) \pair{x}{y})
\, \mathbb{P}_X(dx) 
\, \mathbb{P}_Y(dy)
  \\&=
\int_{Y}
  v(y)
\, \mathbb{P}_Y(dy)
\int_{X}
  1_A(x)
\, \mathbb{P}_X(dx) 
  \\&=
\mathbb{E}^{\mathbb{P}_Y}[
  v
]
\int_{A}
  1_X
\, d \mathbb{P}_X .
\end{align*}
\end{proof}

\begin{thm}
\label{thm:indep}
Let
$
f^- :
  \ProbX
\to
  \ProbZ
$
and
$
g^- :
  \ProbY
\to
  \ProbZ
$
be two independent
$\Prob$-arrows.
Then, 
for every
$
v \in \mathcal{L}^{\infty}(\ProbY)
$,
we have
\begin{equation}
E^{f^-}(v \circ g)
  \sim_{\mathbb{P}_X}
\mathbb{E}^{\mathbb{P}_Z}[
  v \circ g
]
E^{f^-}(1_Z).
\end{equation}

\end{thm}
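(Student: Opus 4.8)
The plan is to exploit the factorizations of $f^-$ and $g^-$ supplied by independence and then push the random variable $v \circ g$ through the conditional-expectation functor one arrow at a time, reducing the whole computation to the two lemmas already in hand. The only genuinely new observation needed is that the leftover factor from passing through the change-of-measure arrow $id_X^-$ is precisely $E^{f^-}(1_Z)$.

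First I unwind the hypothesis. By Definition \ref{defn:indepProb} there is a measure-preserving arrow $q^- : \ProbX_{f^-} \otimes \ProbY_{g^-} \to \ProbZ$ making the independence diagram commute; combined with the splitting of Proposition \ref{prop:splitProbA} this gives $f^- = q^- \circ p_1^- \circ id_X^-$ and $g^- = q^- \circ p_2^- \circ id_Y^-$. Reading these at the level of underlying functions (each $a^-$ reverses $a$), they say $f = p_1 \circ q$ and $g = p_2 \circ q$, where $q : Z \to X \times Y$ satisfies $\mathbb{P}_Z \circ q^{-1} = \mu \otimes \nu$, with $\mu := \mathbb{P}_Z \circ f^{-1}$ on $X$ and $\nu := \mathbb{P}_Z \circ g^{-1}$ on $Y$; these $\mu$ and $\nu$ are exactly the measures carried by $\ProbX_{f^-}$ and $\ProbY_{g^-}$.

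Next, using the contravariant functoriality of $\mathcal{E}$ (Proposition \ref{prop:fun_E}(2)) I split $E^{f^-}(v \circ g) = E^{id_X^-}\big(E^{p_1^-}(E^{q^-}(v \circ g))\big)$ and evaluate from the inside out. Since $g = p_2 \circ q$ we have $v \circ g = (v \circ p_2) \circ q$ with $v \circ p_2 \in \mathcal{L}^{\infty}(\ProbX_{f^-} \otimes \ProbY_{g^-})$, so because $q^-$ is measure-preserving Lemma \ref{lem:ELid} yields $E^{q^-}(v \circ g) = v \circ p_2$. Then Lemma \ref{lem:ELpi}, applied to the product $\ProbX_{f^-} \otimes \ProbY_{g^-}$, gives $E^{p_1^-}(v \circ p_2) \sim_{\mu} \mathbb{E}^{\nu}[v]\, 1_X$, and the transformation theorem rewrites the scalar as $\mathbb{E}^{\nu}[v] = \int_Y v \, d(\mathbb{P}_Z \circ g^{-1}) = \mathbb{E}^{\mathbb{P}_Z}[v \circ g]$.

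Finally I apply $E^{id_X^-}$ to transfer this identity from $\mathcal{L}^1(\ProbX_{f^-})$ to $\mathcal{L}^1(\ProbX)$: Proposition \ref{prop:fun_E}(1) shows the almost-sure relation is respected and linearity (Proposition \ref{prop:lin}) pulls out the scalar, so $E^{f^-}(v \circ g) \sim_{\mathbb{P}_X} \mathbb{E}^{\mathbb{P}_Z}[v \circ g]\, E^{id_X^-}(1_X)$. To finish I identify $E^{id_X^-}(1_X) \sim_{\mathbb{P}_X} E^{f^-}(1_Z)$: since $1_Z = 1_X \circ f$ and $f^{\sim}$ is measure-preserving, Lemma \ref{lem:ELid} gives $E^{f^{\sim}}(1_Z) = 1_X$, whence $E^{f^-}(1_Z) = E^{id_X^-}(E^{f^{\sim}}(1_Z)) = E^{id_X^-}(1_X)$. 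The main obstacle is bookkeeping rather than analysis: one must keep straight the three measures $\mathbb{P}_X$, $\mu$, $\nu$ living on the intermediate objects, and in particular recognize that the passage through $id_X^-$ is exactly what converts the constant $1_X$ into the Radon–Nikodym factor $E^{f^-}(1_Z)$, so the final statement lands in $\mathcal{L}^1(\ProbX)$ with the correct $\mathbb{P}_X$-almost-sure qualifier.
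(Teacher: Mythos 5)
Your proof is correct and follows essentially the same route as the paper's: decompose $f^-$ and $g^-$ through the independence diagram, kill the measure-preserving arrow $q^-$ with Lemma \ref{lem:ELid}, reduce the projection step to Lemma \ref{lem:ELpi}, and then identify the leftover factor $E^{id_X^-}(1_X)$ with $E^{f^-}(1_Z)$. The only (harmless) divergence is in that last identification, where the paper verifies $\int_A E^{id_X^-}(1_X)\,d\mathbb{P}_X = \int_A E^{f^-}(1_Z)\,d\mathbb{P}_X$ directly, while you obtain it from the factorization $f^- = f^{\sim} \circ id_X^-$ together with one more application of Lemma \ref{lem:ELid} to the measure-preserving arrow $f^{\sim}$.
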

\begin{proof}
For the diagram in Definition \ref{defn:indepProb},
apply $\mathcal{E}$ to its left box,
and
apply $\mathbf{L}$ to its right box.
Then, we get the following diagram.
\begin{equation*}
\xymatrix@C=20 pt@R=20 pt{
  L^1 \ProbX
&
  L^1 \ProbZ
    \ar @{->}_-{\mathcal{E} f^-} [l]
    \ar @{->}_-{\mathcal{E} f^{\sim}} [ld]
    \ar @{->}^-{\mathcal{E} q^-} [d]
&
  L^{\infty} \ProbZ
    \ar @{_{(}->} [l]
&
  L^{\infty} \ProbY
    \ar @{->}_-{\mathbf{L} g^-} [l]
    \ar @{->}^-{\mathbf{L} id_Y^{-}} [d]
\\
  L^1 \ProbX_{f^-}
    \ar @{->}^-{\mathcal{E} id_X^{-}} [u]
&
  L^1(\ProbX_{f^-} \otimes \ProbY_{g^-})
    \ar @{->}^-{\mathcal{E} p_1^-} [l]
&
  L^{\infty}(\ProbX_{f^-} \otimes \ProbY_{g^-})
    \ar @{->}^-{\mathbf{L} q^-} [u]
    \ar @{_{(}->} [l]
&
  L^{\infty} \ProbY_{g^-}
    \ar @{->}^-{\mathbf{L} p_2^-} [l]
    \ar @{->}_-{\mathbf{L} g^{\sim}} [lu]
}
\end{equation*}
The left and right boxes in the above diagram commute
since they are images of functors
$\mathcal{E}$
and
$\mathbf{L}$,
the center box also commutes by
Lemma \ref{lem:ELid},
and so does the whole diagram.
Now for
$
v \in \mathcal{L}^{\infty}(\ProbNY)
$,
let us see the values at the upper-left corner of the diagram
developed
through two paths,
which should coincide.
\begin{align*}
(
  \mathcal{E} f^-
\circ
  \mathbf{L} g^-
)
[
  v
]_{\sim_{\mathbb{P}_Y}}
  &=
[
  E^{f^-}(v \circ g)
]_{\sim_{\mathbb{P}_X}} ,
\\
(
  \mathcal{E} id_X^{-}
\circ
  \mathcal{E} p_1^-
\circ
  \mathbf{L} p_2^-
\circ
  \mathbf{L} id_Y^{-}
)
[
  v
]_{\sim_{\mathbb{P}_Y}}
  &=
[
  (
    E^{id_X^{-}}
      \circ
    E^{p_1^-}
  )
  (v \circ p_2)
]_{\sim_{\mathbb{P}_X}} .
\end{align*}
Hence by
Lemma \ref{lem:ELpi},
\begin{equation*}
  E^{f^-}(v \circ g)
\sim_{\mathbb{P}_X}
  \mathbb{E}^{
    \mathbb{P}_Z
      \circ
    g^{-1}
  }[
     v
  ]
  E^{id_X^{-}}(
     1_X
  )
\sim_{\mathbb{P}_X}
  \mathbb{E}^{
    \mathbb{P}_Z
  }[
     v \circ g
  ]
  E^{id_X^{-}}(
     1_X
  ) .
\end{equation*}
Now for every
$
A \in \Sigma_X
$,
\begin{align*}
\int_A
  E^{id_X^{-}}(
     1_X
  )
\, d \mathbb{P}_X
  &=
\int_{A}
  1_X
\, d (\mathbb{P}_Z \circ f^{-1})
  =
(\mathbb{P}_Z \circ f^{-1})
  (A)
  \\&=
\int_{f^{-1}(A)}
  1_Z
\, d \mathbb{P}_Z
  =
\int_A
  E^{f^-}(1_Z)
\, d \mathbb{P}_X .
\end{align*}
Therefore,
$
  E^{id_X^{-}}(
     1_X
  )
\sim_{\mathbb{P}_X}
  E^{f^-}(1_Z)
$,
which completes the proof.
\end{proof}

\begin{defn}{[$f^-$-independence]}
\label{defn:indep}
For a random variable
$
v
  \in
\mathcal{L}^{1}(\ProbNY)
$,
we define
a probability space
$
\bar{\mathbb{R}}_v
$
by
$
\bar{\mathbb{R}}_v
  :=
(
  \mathbb{R},
  \mathcal{B}(\mathbb{R}),
  \mathbb{P}_Y \circ v^{-1}
)
$.
Then,
$
v^-
  :
\bar{\mathbb{R}}_v
  \to
\ProbY
$
is a 
$\Prob$-arrow.
Now
for a
$\Prob$-arrow
$
f^-
  :
\ProbX
  \to
\ProbY
$,
$v$
is said to be
\newword{independent}
of
$f^-$,
denoted by
$
v \bot f^-
$,
if
$f^-$
and
$v^-$
are independent.
\end{defn}

The following proposition allows us to say that
an element of
$
L^{1}(\ProbNY)
$
is
independent of
$
f^-
$.

\begin{prop}
\label{prop:indep_invariance}
Let 
$
f^-
  :
\ProbX
  \to
\ProbY
$
be a
$\Prob$-arrow
and
$
v_1
$
and
$
v_2
$
be two elements of
$
\mathcal{L}^{1}(\ProbNY)
$
satisfying
$
v_1
   \sim_{\mathbb{P}_Y}
v_2
$.
Then,
$
v_1 \bot f^-
$
implies
$
v_2 \bot f^-
$.
\end{prop}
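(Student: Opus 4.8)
The plan is to reduce $f^-$-independence to the classical product formula for the underlying measurable functions, and then observe that this formula is insensitive to modifying $v$ on a $\mathbb{P}_Y$-null set.

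First I would unwind the definitions. By Definition \ref{defn:indep}, $v_i \bot f^-$ means that $f^-$ and $v_i^-$ are independent in the sense of Definition \ref{defn:indepProb}. Passing to the split objects via Proposition \ref{prop:splitProbA}, this is exactly Franz-independence (Definition \ref{defn:mpIndep}) of the measure-preserving arrows $f^{\sim} : \ProbX_{f^-} \to \ProbY$ and $v_i^{\sim}$. Note that, since the measure on $\bar{\mathbb{R}}_{v_i}$ is already the pushforward $\mathbb{P}_Y \circ v_i^{-1}$, the arrow $v_i^-$ is itself measure-preserving, so $v_i^{\sim} = v_i^-$ and its underlying function is $v_i : Y \to \mathbb{R}$. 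Applying Proposition \ref{prop:mpIndep} (with $\ProbZ = \ProbY$) then shows that $v_i \bot f^-$ holds if and only if $\mathbb{P}_Y(f^{-1}(A) \cap v_i^{-1}(B)) = \mathbb{P}_Y(f^{-1}(A)) \, \mathbb{P}_Y(v_i^{-1}(B))$ for every $A \in \Sigma_X$ and $B \in \mathcal{B}(\mathbb{R})$.

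Second, I would exploit $v_1 \sim_{\mathbb{P}_Y} v_2$. For any $B \in \mathcal{B}(\mathbb{R})$, the sets $v_1^{-1}(B)$ and $v_2^{-1}(B)$ differ only inside $\{v_1 \ne v_2\}$, a $\mathbb{P}_Y$-null set; hence $\mathbb{P}_Y(v_1^{-1}(B)) = \mathbb{P}_Y(v_2^{-1}(B))$, and likewise $\mathbb{P}_Y(f^{-1}(A) \cap v_1^{-1}(B)) = \mathbb{P}_Y(f^{-1}(A) \cap v_2^{-1}(B))$ for every $A$. Substituting these equalities into the characterization for $v_1$ yields the same characterization for $v_2$, that is, $v_2 \bot f^-$. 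One first records that $v_2^-$ is a legitimate $\Prob$-arrow, which holds automatically as in Definition \ref{defn:indep}, and that in fact $\bar{\mathbb{R}}_{v_1} = \bar{\mathbb{R}}_{v_2}$ since the two pushforward measures coincide.

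The main obstacle is the bookkeeping in the first step rather than any genuine analysis: one must verify that the commuting diagram defining independence in $\Prob$ coincides, after splitting, with Franz's diagram for honest $\mpProb$-arrows, and that the underlying functions and measures line up so that Proposition \ref{prop:mpIndep} delivers precisely the product formula in $\mathbb{P}_Y$. A tempting shortcut, reusing the witnessing arrow $q^-$ for $v_1$ as a witness for $v_2$, fails: the commutation forces the underlying function of any witness to be $(f, v_i)$, and $(f, v_1) \ne (f, v_2)$ as actual functions on $\{v_1 \ne v_2\}$. This is exactly why I route through the probabilistic characterization, where only $\mathbb{P}_Y$-measures of events enter and the null-set modification becomes harmless.
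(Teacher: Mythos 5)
Your proof is correct, but it follows a different route from the paper's. The paper argues directly inside the category: it keeps the witness arrow in play, notes that commutativity forces the underlying function of any witness for $v_i$ to be $q_i = \pair{f}{v_i}$, defines $q_2$ explicitly, and verifies by hand that $\mathbb{P}_Y \circ q_1^{-1} = \mathbb{P}_Y \circ q_2^{-1}$ on \emph{all} of $\Sigma_X \otimes \mathcal{B}(\mathbb{R})$ (by intersecting with $M = \{v_1 = v_2\}$ and its complement), so that $q_2^-$ inherits the measure-preserving property from $q_1^-$ together with $\mathbb{P}_Y \circ v_1^{-1} = \mathbb{P}_Y \circ v_2^{-1}$. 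You instead collapse the categorical definition to Franz's product formula via Proposition \ref{prop:mpIndep} and transfer that formula across the null set, checking equality only on rectangles $f^{-1}(A) \cap v_i^{-1}(B)$. Both proofs hinge on the same observation (events built from $v_1$ and $v_2$ have equal $\mathbb{P}_Y$-measure), but yours outsources the extension from rectangles to the full product $\sigma$-algebra to the cited characterization, which the paper states without proof, whereas the paper's argument is self-contained and never needs a uniqueness-of-product-measure step. Your reduction is legitimate: since $\bar{\mathbb{R}}_{v_i}$ already carries the pushforward measure, $v_i^{\sim} = v_i^-$, and Definition \ref{defn:indepProb} for $f^-, v_i^-$ is exactly Definition \ref{defn:mpIndep} for the measure-preserving arrows $f^{\sim}, v_i^{\sim}$, so Proposition \ref{prop:mpIndep} applies; and your remark that one cannot simply reuse $q_1^-$ as a witness for $v_2$ correctly identifies the point the paper's explicit construction is designed to handle. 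One small bookkeeping item you share with the paper: you should (and do) record $\bar{\mathbb{R}}_{v_1} = \bar{\mathbb{R}}_{v_2}$, so that the product space in the two independence diagrams is literally the same object.
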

\begin{proof}
Assume that
$
v_1
   \sim_{\mathbb{P}_Y}
v_2
$
and
$
v_1 \bot f^-
$.
Let
$
N := \{
  y \in Y
\mid
  v_1(y) \ne v_2(y)
\}
$
and
$
M := Y - N
$.
Then,
$
\mathbb{P}_Y(N) = 0
$.

First,
we show that
\begin{equation}
\label{eq:indep_invariancev}
\mathbb{P}_Y \circ v_1^{-1}
  =
\mathbb{P}_Y \circ v_2^{-1} .
\end{equation}
For every
$
B \in
\mathcal{B}(\mathbb{R})
$
and
$
i = 1, 2
$,
\begin{equation*}
(\mathbb{P}_Y \circ v_i^{-1})(B)
  =
\mathbb{P}_Y(v_i^{-1}(B) \cap N)
  +
\mathbb{P}_Y(v_i^{-1}(B) \cap M)
  =
\mathbb{P}_Y(v_i^{-1}(B) \cap M) .
\end{equation*}
But,
\begin{equation*}
y \in v_1^{-1}(B) \cap M
  \Leftrightarrow
v_1(y) = v_2(y) \in B
  \Leftrightarrow
y \in v_2^{-1}(B) \cap M ,
\end{equation*}
which proves 
(\ref{eq:indep_invariancev}).
Hence,
$
\bar{\mathbb{R}}_{v_1}
  =
\bar{\mathbb{R}}_{v_2}
$.

Now since 
$
v_1 \bot f^-
$,
we have the following 
measure-preserving
$
q_1^-
$.
\begin{equation*}
\xymatrix@C=20 pt@R=20 pt{
  \ProbX
    \ar @{->}^-{f^-} [r]
    \ar @{->}_-{id_X^{-}} [d]
&
  \ProbY
\\
  \ProbX_{f^-}
    \ar @{->}_-{p_1^-} [r]
    \ar @{->}^-{f^{\sim}} [ru]
&
  \ProbX_{f^-}
     \otimes 
  \bar{\mathbb{R}}_{v_1}
    \ar @{.>}^-{q_1^-} [u]
&
\bar{\mathbb{R}}_{v_1}
    \ar @{->}^-{p_2^-} [l]
    \ar @{->}_-{v_1^-} [lu]
}
\end{equation*}
Then,
$
q_1
$
satisfies that
$
q_1(y)
  =
\pair{
  f(y)
}{
  v_1(y)
}
$
for all 
$
y \in Y
$.
Similarly we define a function
$
q_2
  :
Y
  \to
X \times R
$
by
$
q_2(y)
  =
\pair{
  f(y)
}{
  v_2(y)
}
$
for all 
$
y \in Y
$.
Then, all we need to show is that
$q_2^-$
is a measure-preserving
$\Prob$-arrow,
in other words,
\begin{equation}
\mathbb{P}_Y
  \circ
q_2^{-1}
  =
(\mathbb{P}_Y \circ f^{-1})
  \otimes
(\mathbb{P}_Y \circ v_2^{-1}) .
\end{equation}
However,
by
(\ref{eq:indep_invariancev})
and
the fact that
$q_1^-$
is measure-preserving,
 it is enough to show that
\begin{equation}
\label{eq:indep_invarianceq}
\mathbb{P}_Y \circ q_1^{-1}
  =
\mathbb{P}_Y \circ q_2^{-1} .
\end{equation}
For any
$
E
  \in
\Sigma_X \otimes \mathcal{B}(\mathbb{R})
$
and
$
i = 1, 2
$,
\begin{equation*}
(\mathbb{P}_Y \circ q_i^{-1})(E)
  =
\mathbb{P}_Y(q_i^{-1}(E) \cap N)
  +
\mathbb{P}_Y(q_i^{-1}(E) \cap M)
  =
\mathbb{P}_Y(q_i^{-1}(E) \cap M) .
\end{equation*}
But,
\begin{align*}
y \in
q_1^{-1}(E) \cap M
   &\Leftrightarrow
(
\pair{f(y)}{v_1(y)} \in E
  \, \land \,
v_1(y) = v_2(y)
)
\\
   &\Leftrightarrow
(
\pair{f(y)}{v_2(y)} \in E
  \, \land \,
v_1(y) = v_2(y)
)
   \Leftrightarrow
y \in
q_2^{-1}(E) \cap M ,
\end{align*}
which proves
(\ref{eq:indep_invarianceq}).
\end{proof}

\begin{prop}
\label{prop:0_indep}
Let 
$
!_Y^-
  :
\mathbb{0}
  \to
\ProbY
$
be a
unique
$\Prob$-arrow
and
$
v
  \in
\mathcal{L}^{1}(\ProbNY)
$.
Then, 
$v$
is independent of
$
!_Y^-
$.
\end{prop}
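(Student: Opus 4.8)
The plan is to unwind the two layers of the definition and then write down the required arrow explicitly. By Definition \ref{defn:indep}, the assertion $v \bot\, !_Y^-$ means precisely that the $\Prob$-arrows $!_Y^- : \mathbb{0} \to \ProbY$ and $v^- : \bar{\mathbb{R}}_v \to \ProbY$ are independent in the sense of Definition \ref{defn:indepProb}, with $\ProbY$ playing the role of the common target. So what I must produce is a measure-preserving arrow $q^- : \mathbb{0}_{!_Y^-} \otimes (\bar{\mathbb{R}}_v)_{v^-} \to \ProbY$ fitting into the diagram of Definition \ref{defn:indepProb}, after which the triangles and outer squares of that diagram need to be checked to commute.

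First I would identify the two split objects of Proposition \ref{prop:splitProbA}. Since $!_Y^{-1}(\{*\}) = Y$, we have $\mathbb{P}_Y \circ\, !_Y^{-1} = \mathbb{P}_{\mathbb{0}}$, so $\mathbb{0}_{!_Y^-} = \mathbb{0}$; and since $\bar{\mathbb{R}}_v$ already carries the measure $\mathbb{P}_Y \circ v^{-1}$, we have $(\bar{\mathbb{R}}_v)_{v^-} = \bar{\mathbb{R}}_v$. Hence the domain of $q^-$ is $\mathbb{0} \otimes \bar{\mathbb{R}}_v$, whose underlying set is $\{*\} \times \mathbb{R}$ and whose measure is $\mathbb{P}_{\mathbb{0}} \otimes (\mathbb{P}_Y \circ v^{-1})$. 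I would then define $q : Y \to \{*\} \times \mathbb{R}$ by $q(y) := \pair{*}{v(y)}$; this choice is in fact forced, because the left triangle demands $p_1 \circ q =\, !_Y$ and the right triangle demands $p_2 \circ q = v$. Measurability of $q$ is immediate, since the product $\sigma$-algebra is $\{\{*\} \times B \mid B \in \mathcal{B}(\mathbb{R})\}$ and $q^{-1}(\{*\} \times B) = v^{-1}(B) \in \Sigma_Y$. Both triangles then commute by construction, and the outer squares commute by Proposition \ref{prop:splitProbA}.

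The one substantive step, which I regard as the crux, is to verify that $q^-$ is measure-preserving, i.e. $\mathbb{P}_Y \circ q^{-1} = \mathbb{P}_{\mathbb{0}} \otimes (\mathbb{P}_Y \circ v^{-1})$. Because $\mathbb{P}_{\mathbb{0}}$ is the point mass at $*$, it is enough to test both sides on the generating rectangles $\{*\} \times B$, where the left side equals $\mathbb{P}_Y(v^{-1}(B)) = (\mathbb{P}_Y \circ v^{-1})(B)$ and the right side equals $\mathbb{P}_{\mathbb{0}}(\{*\})\,(\mathbb{P}_Y \circ v^{-1})(B) = (\mathbb{P}_Y \circ v^{-1})(B)$; since these rectangles form a $\pi$-system generating the product $\sigma$-algebra, the two measures coincide. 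In particular $\mathbb{P}_Y \circ q^{-1} \ll \mathbb{P}_{\mathbb{0}} \otimes (\mathbb{P}_Y \circ v^{-1})$, so $q^-$ is a genuine $\Prob$-arrow by Proposition \ref{prop:null_pres} and is measure-preserving, which supplies the required $q^-$ and establishes independence. I do not expect any real difficulty here: morally, tensoring with the initial object $\mathbb{0}$ is inert, so the independence diagram collapses to the tautology that $v^-$ factors through itself.
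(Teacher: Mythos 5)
Your proof is correct: the paper's own proof of this proposition is literally the single word ``Obvious,'' and your explicit construction of $q(y) = \pair{*}{v(y)}$, the identification $\mathbb{0}_{!_Y^-} = \mathbb{0}$ and $(\bar{\mathbb{R}}_v)_{v^-} = \bar{\mathbb{R}}_v$, and the $\pi$-system check that $\mathbb{P}_Y \circ q^{-1} = \mathbb{P}_{\mathbb{0}} \otimes (\mathbb{P}_Y \circ v^{-1})$ is precisely the verification the authors are leaving implicit. Nothing is missing and nothing diverges from the intended argument.
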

\begin{proof}
Obvious.
\end{proof}


\begin{thm}
\label{thm:f_indep}
Let 
$
f^-
  :
\ProbX
  \to
\ProbY
$
be a
$\Prob$-arrow
and
$
v
  \in
\mathcal{L}^{1}(\ProbNY)
$
which is independent of
$
f^-
$.
Then we have,
\begin{equation}
\label{eq:f_inde_th}
E^{f^-}(v)
   \sim_{\mathbb{P}_X}
\mathbb{E}^{\mathbb{P}_Y}[v]
E^{f^-}(1_Y) .
\end{equation}

\end{thm}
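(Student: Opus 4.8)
The plan is to reduce everything to Theorem~\ref{thm:indep}, which already carries the real work, by feeding it the hypothesis $v \bot f^-$. By Definition~\ref{defn:indep}, $v \bot f^-$ means precisely that the $\Prob$-arrows $f^- : \ProbX \to \ProbY$ and $v^- : \bar{\mathbb{R}}_v \to \ProbY$ are independent. Hence Theorem~\ref{thm:indep}, applied with $\ProbZ := \ProbY$ and $g^- := v^-$ (so that the underlying function $g$ is $v : Y \to \mathbb{R}$), yields for every bounded Borel $\phi \in \mathcal{L}^{\infty}(\bar{\mathbb{R}}_v)$ the relation $E^{f^-}(\phi \circ v) \sim_{\mathbb{P}_X} \mathbb{E}^{\mathbb{P}_Y}[\phi \circ v] \, E^{f^-}(1_Y)$. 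Taking $\phi = \mathrm{id}_{\mathbb{R}}$ would give the claim at once, since $\mathrm{id}_{\mathbb{R}} \circ v = v$; the obstruction is that $\mathrm{id}_{\mathbb{R}}$ is not in $\mathcal{L}^{\infty}(\bar{\mathbb{R}}_v)$ when $v$ is merely in $\mathcal{L}^1(\ProbNY)$. So the substance of the proof is to bridge this $\mathcal{L}^{\infty}$-to-$\mathcal{L}^1$ gap by truncation.

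First I would treat the nonnegative part. Put $v_+ := v \vee 0$ and take the bounded truncations $\phi_n(x) := 0 \vee (x \wedge n)$, which lie in $\mathcal{L}^{\infty}(\bar{\mathbb{R}}_v)$ and satisfy $0 \le \phi_n \circ v \uparrow v_+$ pointwise. For each $n$ the above instance of Theorem~\ref{thm:indep} gives $E^{f^-}(\phi_n \circ v) \sim_{\mathbb{P}_X} \mathbb{E}^{\mathbb{P}_Y}[\phi_n \circ v] \, E^{f^-}(1_Y)$. Letting $n \to \infty$, the left-hand side increases to $E^{f^-}(v_+)$ by the monotone convergence property of conditional expectation along $f^-$ (Proposition~\ref{prop:mc}), while on the right $\mathbb{E}^{\mathbb{P}_Y}[\phi_n \circ v] \uparrow \mathbb{E}^{\mathbb{P}_Y}[v_+]$ by the classical monotone convergence theorem and $E^{f^-}(1_Y)$ is a fixed nonnegative factor (Proposition~\ref{prop:pos}). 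Hence $E^{f^-}(v_+) \sim_{\mathbb{P}_X} \mathbb{E}^{\mathbb{P}_Y}[v_+] \, E^{f^-}(1_Y)$.

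Symmetrically, with $\psi_n(x) := 0 \vee ((-x) \wedge n)$ and $v_- := (-v) \vee 0$, the identical argument yields $E^{f^-}(v_-) \sim_{\mathbb{P}_X} \mathbb{E}^{\mathbb{P}_Y}[v_-] \, E^{f^-}(1_Y)$. Finally I would write $v = v_+ - v_-$ and combine by linearity of conditional expectation along $f^-$ (Proposition~\ref{prop:lin}): since $E^{f^-}(v) \sim_{\mathbb{P}_X} E^{f^-}(v_+) - E^{f^-}(v_-)$, the two relations above give $E^{f^-}(v) \sim_{\mathbb{P}_X} (\mathbb{E}^{\mathbb{P}_Y}[v_+] - \mathbb{E}^{\mathbb{P}_Y}[v_-]) \, E^{f^-}(1_Y) = \mathbb{E}^{\mathbb{P}_Y}[v] \, E^{f^-}(1_Y)$, which is (\ref{eq:f_inde_th}). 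The one point to verify is that $v_+, v_-$ genuinely lie in $\mathcal{L}^1(\ProbNY)$, so that the conditional expectations exist; this holds because $0 \le v_\pm \le \abs{v}$ and $v \in \mathcal{L}^1(\ProbNY)$.

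The main obstacle is thus purely the integrability gap: Theorem~\ref{thm:indep} is stated for bounded inner functions, whereas here $v$ need only be integrable. The feature that makes the truncation painless is that Theorem~\ref{thm:indep} already quantifies over \emph{all} bounded $\phi$ against the single independence hypothesis $f^- \bot v^-$; consequently I never have to re-prove that the truncations, or the positive and negative parts of $v$, are themselves independent of $f^-$. Each approximation $\phi_n \circ v$ is handled directly by Theorem~\ref{thm:indep}, and only the passage to the limit (Proposition~\ref{prop:mc}) and the recombination (Proposition~\ref{prop:lin}) remain.
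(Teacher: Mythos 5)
Your proposal is correct and follows essentially the same route as the paper: apply Theorem~\ref{thm:indep} to bounded truncations of $v$ (the paper uses $u_n := id_{\mathbb{R}} \cdot 1_{[-n,n]}$ where you use $0 \vee (x \wedge n)$, an immaterial difference), split into positive and negative parts, pass to the limit with Proposition~\ref{prop:mc}, and recombine with Proposition~\ref{prop:lin}. No gaps.
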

\begin{proof}
Let
$
\{
  u_n
    :
  \mathbb{R}
    \to
  \mathbb{R}
\}_{n \in \mathbb{N}}
$
be a sequence of functions defined by
$
u_n
  :=
id_{\mathbb{R}} \cdot 1_{[-n,n]}
$.
Then
by Theorem \ref{thm:indep},
$
E^{f^-}(u_n \circ v)
  \sim_{\mathbb{P}_X}
\mathbb{E}^{\mathbb{P}_Y}[
  u_n \circ v
]
E^{f^-} (1_Y)
$
since
$
u_n
  \in
L^{\infty}(
  \bar{\mathbb{R}}_{u_n}
)
$.
On the other hand,
\if \obsolete 1
{\color{red}
$
u_n
   \to
id_{\mathbb{R}}
$
as
$n$
goes to
$\infty$.
} 
\fi 
we have
\[
 u_n \circ v = u_n \circ v_+ - u_n \circ v_-
\]
and
\begin{align*}
 0 &\le u_n \circ v_+ \uparrow v_+ , \\
 0 &\le u_n \circ v_- \uparrow v_-
\end{align*}
as $n$ goes to $\infty$, where $v_+$ ($v_-$) is the positive (negative) part of the $\mathbb{R}$-valued function $v$.
So by
Proposition \ref{prop:lin}
and
Proposition \ref{prop:mc},
we obtain
\begin{align*}
 E^{f^-} (v) &\sim_{\mathbb{P}_X} \lim_{n \to \infty} E^{f^-} (u_n \circ v) \\
 &
 \sim_{\mathbb{P}_X} \lim_{n \to \infty} \left( E^{f^-} (u_n \circ v_+) - E^{f^-} (u_n \circ v_-) \right)
 \\
 &
 \sim_{\mathbb{P}_X} \lim_{n \to \infty} \mathbb{E}^{\mathbb{P}_Y}[u_n \circ v_+] E^{f^-} (1_Y) - \lim_{n \to \infty} \mathbb{E}^{\mathbb{P}_Y}[u_n \circ v_-] E^{f^-} (1_Y)
 \\
 &
 \sim_{\mathbb{P}_X} \mathbb{E}^{\mathbb{P}_Y}[v_+] E^{f^-} (1_Y) - \mathbb{E}^{\mathbb{P}_Y}[v_-] E^{f^-} (1_Y)
 \\
 &\sim_{\mathbb{P}_X} \mathbb{E}^{\mathbb{P}_Y}[v]
E^{f^-} (1_Y).
\qedhere
\end{align*}
\end{proof}

As a combination of
(\ref{eq:f_inde_th})
and
(\ref{eq:uncond_val}),
we have
\begin{equation}
\label{eq:rel_cond_uncond_when_ind}
E^{f^-}(v)
   \sim_{\mathbb{P}_X}
E^{!_Y^-}(v)(*)
E^{f^-}(1_Y) ,
\end{equation}
which is a natural generalization of
the relationship between 
classical conditional expectations
given independent 
$\sigma$-fields and
unconditional expectations.

\section{Completion Functor}
\label{sec:complF}


The following definition is taken from 
pages 202-203 of
\cite{williams1991}.

\begin{defn}{\cite{williams1991}}
\label{defn:comp_sp}
Let
$
(X, \Sigma_X, \mathbb{P}_X)
$
be a probability space.
\begin{enumerate}
\item
$
\Sigma_X^*
  := \{
F \subset X
  \mid
\exists
  A, B \in \Sigma_X,
A \subset F \subset B
  \; \mathrm{and} \;
\mathbb{P}_X( B - A) = 0
\}
$,

\item
For
$
F \in \Sigma_X^*
$,
$
\mathbb{P}_X^*(F)
$
is defined by
$
\mathbb{P}_X^*(F)
  :=
\mathbb{P}_X(A)
  =
\mathbb{P}_X(B)
$,
where
$
A, B
  \in
\Sigma_X
$
satisfies
$
A \subset F \subset B
$
and
$
\mathbb{P}_X( B - A) = 0
$.

\end{enumerate}

\end{defn}

Then, it is well-known that the triple
$
(X, \Sigma_X^*, \mathbb{P}_X^*)
$
is well-defined and
becomes a probability space
called the
\newword{completion}
of
$
(X, \Sigma_X, \mathbb{P}_X)
$.

\begin{prop}
\label{prop:comp_f_lemma}
Let 
$
f^-
  :
(X, \Sigma_X, \mathbb{P}_X)
  \to
(Y, \Sigma_Y, \mathbb{P}_Y)
$
be a
$\Prob$-arrow.
\begin{enumerate}
\item
The function
$
f : Y \to X
$
is
$
\Sigma_Y^*
  /
\Sigma_X^*
$-measurable.

\item
$
\mathbb{P}_Y^* \circ f^{-1}
  \ll
\mathbb{P}_X^*
$.

\end{enumerate}
\end{prop}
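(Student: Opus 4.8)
The plan is to reduce both statements to two elementary properties of preimages together with the null-preserving hypothesis on $f$. Recall that as a $\Prob$-arrow, $f^- : (X, \Sigma_X, \mathbb{P}_X) \to (Y, \Sigma_Y, \mathbb{P}_Y)$ corresponds to a measurable null-preserving function $f : (Y,\Sigma_Y,\mathbb{P}_Y) \to (X,\Sigma_X,\mathbb{P}_X)$, so $f^{-1}$ carries $\Sigma_X$ into $\Sigma_Y$ and, by Proposition \ref{prop:null_pres}, sends $\mathbb{P}_X$-null sets to $\mathbb{P}_Y$-null sets. The only two set-theoretic facts I need are that $f^{-1}$ preserves inclusions and that it commutes with set differences, i.e. $f^{-1}(B - A) = f^{-1}(B) - f^{-1}(A)$.

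For part (1), I would take an arbitrary $F \in \Sigma_X^*$ and unpack the definition: there are $A, B \in \Sigma_X$ with $A \subset F \subset B$ and $\mathbb{P}_X(B - A) = 0$. Applying $f^{-1}$ gives the sandwich $f^{-1}(A) \subset f^{-1}(F) \subset f^{-1}(B)$, whose two endpoints lie in $\Sigma_Y$ by measurability of $f$. The boundary is $f^{-1}(B) - f^{-1}(A) = f^{-1}(B - A)$, which is $\mathbb{P}_Y$-null because $B - A$ is $\mathbb{P}_X$-null and $f$ is null-preserving. Thus $f^{-1}(A)$ and $f^{-1}(B)$ witness $f^{-1}(F) \in \Sigma_Y^*$, which is exactly $\Sigma_Y^*/\Sigma_X^*$-measurability.

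For part (2), I would keep the same witnesses and observe that $\mathbb{P}_X^*(F) = 0$ forces $\mathbb{P}_X(B) = 0$, since by definition $\mathbb{P}_X^*(F) = \mathbb{P}_X(B)$. Null-preservation then gives $\mathbb{P}_Y(f^{-1}(B)) = 0$, and since $f^{-1}(A) \subset f^{-1}(F) \subset f^{-1}(B)$ are precisely the witnesses found in part (1), the definition of $\mathbb{P}_Y^*$ yields $\mathbb{P}_Y^*(f^{-1}(F)) = \mathbb{P}_Y(f^{-1}(B)) = 0$. Hence $\mathbb{P}_Y^* \circ f^{-1} \ll \mathbb{P}_X^*$.

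There is no serious obstacle here; the content is entirely in recognizing that the null-preserving condition on $f$ is exactly what is needed to transport the $\mathbb{P}_X$-null gap $B - A$ of a completion-witness to a $\mathbb{P}_Y$-null gap. The one point worth stating carefully is the identity $f^{-1}(B - A) = f^{-1}(B) - f^{-1}(A)$, since it is what guarantees that the transported pair is again a valid completion-witness rather than merely a pair of nested measurable sets. I expect the author's proof to simply record this as \emph{Obvious}, but the argument above is the reason it is.
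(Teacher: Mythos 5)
Your proof is correct and follows essentially the same route as the paper: in both parts you transport the completion witnesses $A \subset F \subset B$ through $f^{-1}$ and use null-preservation on the gap $B - A$ (resp. on $B$ itself), exactly as the authors do. The only cosmetic difference is that in part (2) the paper invokes monotonicity of $\mathbb{P}_Y^* \circ f^{-1}$ while you read off $\mathbb{P}_Y^*(f^{-1}(F)) = \mathbb{P}_Y(f^{-1}(B))$ directly from the definition of the completed measure; both are immediate.
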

\begin{proof}
\begin{enumerate}
\item
For any
$
F \in \Sigma_X^*
$,
by
Definition \ref{defn:comp_sp}
there exist
$
A, B
  \in
\Sigma_X
$
such that
$
A \subset F \subset B
$
and
$
\mathbb{P}_X(B - A) = 0
$.
Then, since
$
\mathbb{P}_Y \circ f^{-1}
  \ll
\mathbb{P}_X
$,
we have
$
f^{-1}(A)
 \subset
f^{-1}(F)
 \subset
f^{-1}(B)
$
and
$
\mathbb{P}_Y(
  f^{-1}(B)
-
  f^{-1}(A)
)
  =
\mathbb{P}_Y(
  f^{-1}(
    B - A
  )
)
  =
0
$.
Therefore, again by
Definition \ref{defn:comp_sp},
$
  f^{-1}(F)
\in
  \Sigma_Y^*
$.

\item
Assume that
$
F \in \Sigma_X^*
$
and
$
\mathbb{P}_X(F) = 0
$.
Then,
it is sufficient to show that
$
(\mathbb{P}_Y^* \circ f^{-1})(F) = 0
$.
Now by
Definition \ref{defn:comp_sp},
there exists
$
B \in \Sigma_X
$
such that
$
F \subset B
$
and
$
\mathbb{P}_X^*(F)
  =
\mathbb{P}_X(B)
  =
0
$.
Then by
$
\mathbb{P}_Y \circ f^{-1}
  \ll
\mathbb{P}_X
$,
we have
$
(
\mathbb{P}_Y^* 
  \circ
f^{-1}
)(F)
  \le
(
\mathbb{P}_Y^*
  \circ
f^{-1}
)(B)
  =
(
\mathbb{P}_Y 
  \circ
f^{-1}
)(B)
  =
0
$.
\qedhere
\end{enumerate}
\end{proof}

Proposition \ref{prop:comp_f_lemma}
makes the following definition
well-defined.

\begin{defn}{[Functor $\mathcal{C}$]}
\label{defn:fun_C}
A functor
$
\mathcal{C} : \Prob \to \Prob
$
is defined by:
\begin{equation*}
\xymatrix@C=15 pt@R=30 pt{
    X
  &
    (X, \Sigma_X, \mathbb{P}_X)
     \ar @{->}_{f^-} [d]
     \ar @{|->}^{\mathcal{C}} [rr]
  &&
    \mathcal{C}(X, \Sigma_X, \mathbb{P}_X)
     \ar @{}^-{:=} @<-6pt> [r]
     \ar @{->}^{
       \mathcal{C} f^-
     } [d]
  &
    (X, \Sigma_X^*, \mathbb{P}_X^*)
     \ar @{->}_{f^-} [d]
\\
    Y
      \ar @{->}^f [u]
  &
    (Y, \Sigma_Y, \mathbb{P}_Y)
     \ar @{|->}^{\mathcal{C}} [rr]
  &&
    \mathcal{C}(Y, \Sigma_Y, \mathbb{P}_Y)
     \ar @{}^-{:=} @<-6pt> [r]
  &
    (Y, \Sigma_Y^*, \mathbb{P}_Y^*)
}
\end{equation*}
\end{defn}

The functor
$\mathcal{C}$
is called a
\newword{completion functor}.

\end{document}